\def\mcB{{\mycal B}}
\def\mcM{{\mycal M}}
\def\mcH{{\mycal H}}
\def\mcM{\mathcal{M}}
\def\mcR{{\mycal R}}
\def\mcJ{{\mycal J}}
\def\TTd{\mathbb{T}^d}
\def\iTTd{\int_{\TTd}}
\def\eps{{\varepsilon}}
\newtheorem{theorem} {\sc  Theorem\rm} [section]
\newtheorem{proposition} [theorem] {\sc  Proposition\rm}
\newtheorem{remark}[theorem]{\sc  Remark\rm}
\newcounter{marnote}
\DeclareFontFamily{OT1}{rsfs}{}
\DeclareFontShape{OT1}{rsfs}{m}{n}{ <-7> rsfs5 <7-10> rsfs7 <10-> rsfs10}{}
\DeclareMathAlphabet{\mycal}{OT1}{rsfs}{m}{n}
\def\divop{{\rm div}\,}
\def\Id{{\rm Id}}
\def\be{\begin{equation}}
\def\ee{\end{equation}}
\def\divop{{\rm div}\,}
\def\Id{{\rm Id}}
\def\mcE{{\mycal E}}
\newcommand{\R}{\mathbb{R}}
\newcommand{\N}{\mathbb{N}}
\newcommand{\T}{\mathbb{T}}
\newcommand{\mcD}{\mathcal{D}}
\newcommand{\mcW}{\mathcal{W}}
\newcommand{\fH}{\mathfrak{H}}
\newcommand{\mbE}{\mathbb{E}}
\newcommand{\bV}{\bar{V}}
\newcommand{\bW}{\bar{W}}
\newcommand{\bp}{\bar{p}}
\newcommand{\p}{\partial}
\def\be{\begin{equation}}
\def\ee{\end{equation}}
\def\bea#1\eea{\begin{align}#1\end{align}}
\def\non{\nonumber}
\def\mcR{{\mycal{R}}}
\numberwithin{equation}{section}
\begin{document}
\title{ Variational Dual Solutions
\\ for Incompressible Fluids}

\author{Amit Acharya}
\address{Amit Acharya
\hfill\break\indent
Department of Civil \& Environmental Engineering, and Center for Nonlinear Analysis
\hfill\break\indent
 Carnegie Mellon University
\hfill\break\indent
 Pittsburgh, PA 15213
\hfill\break\indent 
	{\it acharyaamit@cmu.edu}}
\author{Bianca Stroffolini}
 \address{Bianca Stroffolini
 \hfill\break\indent
 Department of Mathematics
\hfill\break\indent
University Federico II Napoli
\hfill\break\indent
Via Cintia, Napoli, 80126, Italy
\hfill\break\indent 
{\it bstroffo@unina.it}}

\author{ Arghir Zarnescu}
\address{Arghir Zarnescu
\hfill\break\indent
BCAM, Basque Center for Applied Mathematics
\hfill\break\indent
Mazarredo 14, E48009 Bilbao, Bizkaia, Spain; IKERBASQUE, Basque Foundation for Science
\hfill\break\indent
Plaza Euskadi 5, 48009 Bilbao, Bizkaia, Spain
\hfill\break\indent
``Simion Stoilow" Institute of the Romanian Academy
\hfill\break\indent
21 Calea Grivi\c{t}ei, 010702 Bucharest, Romania
\hfill\break\indent 
{\it azarnescu@bcamath.org}}

\date{\today}

\begin{abstract}

    We consider a construction proposed  in \cite{acharyaQAM}  that builds on the notion of weak solutions for incompressible fluids to provide a scheme that  generates variationally a certain type of dual solutions. If these dual solutions are regular enough one can use them  to recover standard solutions. The scheme provides a generalisation of a construction of Y. Brenier for the Euler equations. We rigorously analyze the scheme, extending the work of Y.~Brenier for Euler, and also provide an extension of it to the case of the Navier-Stokes equations. Furthermore we obtain the inviscid limit of Navier-Stokes to Euler as a $\Gamma$-limit.
\end{abstract}

\maketitle

\section{ Introduction}
An  article of Yann Brenier, \cite{Brenier-CMP}, proposed a way to reinterpret some initial value problems such as Euler equations or Burgers equation via a concave maximization problem. Indeed, the article considers the least action principle applied to the kinetic energy and using the Euler equation as a constraint. It is shown that the dual problem, which is a concave maximization problem, always admits a solution. Moreover,  sufficiently regular solutions of the Euler equation   give rise to maximizers of the dual problem.

A related viewpoint, proposed by one of us in \cite{acharyaQAM,ach2}, consists of finding critical points of a certain {\it dual }functional. This functional is constructed starting from  the weak form of the equation of interest,  the {\it primal} PDE system, with an additional auxiliary potential which is open to design in order to facilitate the solution or approximation of the given primal system. In particular, the design of the auxiliary potential admits  the inclusion of specified functions of space and time, referred to as `base states' \cite[Sec.~5]{ach2}, in order to guide solutions of the variational problem to recover those of the primal system. This feature also provides a straightforward route to global-in-time consistency of the dual formulation. As an example of such design of the auxiliary potential,  it suffices, for equations with quadratic nonlinearities like the incompressible Euler or Navier-Stokes, to choose a quadratic form in the difference of the primal fields and the corresponding base states. Formally, the critical points of the dual functional, when they are regular enough, generate weak solutions of the primal problem (i.e.~solutions of  Euler or Navier-Stokes), independent of the choice of the auxiliary potential.

This approach allows to use variational methods in studying fluids equations, providing a new, non-standard, perspective on fundamental questions in mathematical fluid mechanics. 

It will turn out that the scheme we look into extends a construction first proposed by Y. Brenier in \cite{Brenier-CMP} for Euler. As noted in  \cite{Brenier-CMP}, one has a certain type of hidden convexity of  the nonlinearity, and this feature will be fundamental for our extensions as well.

The idea of using variational methods is not new and indeed, there have been a number of other approaches such as \cite{liu2019least, Gho,OSS,GiMo,Ped}. The main driving motivation for obtaining a variational framework for solution has been the hope that this way one might be able to uncover additional structure of the equations. Indeed, perhaps the best known setting in which additional structure of the weak solutions of the equations provided useful information is  the celebrated partial regularity work of Caffarelli, Kohn and Nirenberg \cite{caffarelli1982partial} where the additional property of generalized energy inequality turns out to be the crucial element for obtaining partial regularity of (suitable) weak solutions. \par 
In the calculus of variations it is well known that minimizers(maximizers)  of energy functionals have better properties than just critical points, and thus it seems natural to explore how a variational structure might help in uncovering additional regularity of weak solutions. 
In principle, the functionals do not need to  be  differentiable, but still it is possible to infer regularity from the sole minimality assumption. Also, this allows to define solutions with less regularity than weak solutions.

\smallskip
The organisation of the paper is as follows: in the next section we describe the formal scheme as applied to Euler and Navier-Stokes, first using a divergence-free test function formulation and afterwards a formulation with general test functions. Moreover, we explain how the scheme provides an extension of Brenier's work on the Euler equation.

In Section~\ref{sec:Euler} we provide a first rigorous interpretation of the scheme for evolution problems (for statics, see \cite{sga}), allowing to obtain extremizers of a certain energy functional, which, if regular enough, would provide genuine solutions of Euler. We show first the existence of solutions and then show that the classical weak solutions can be regarded as variational dual solutions.  In Section~\ref{sec:NS} we provide an extension of the work for Euler to the case of the Navier-Stokes system, recovering the existence of solutions and showing that strong solutions provide variational dual solutions. Finally in Section 5 we show that the setting we constructed allows us to interpret the inviscid limit of Navier-Stokes to Euler as a $\Gamma$-limit.

\section{Constructing the dual problem}

\subsection {The heuristics motivating  the dual variational problem for Euler equations}
\label{sec:Eulerheuristic}

Next we explain our construction in the setting of the Euler equations and using formal manipulations. To this end
let us consider the incompressible Euler equations in a periodic domain in space
$\mathbb{T}^d=\mathbb{R}^d/\mathbb{Z}^d $ with  $d=2$ or $d=3$. For the fixed time interval $[0,T]$  the system, in classical formulation, reads as \footnote{Throughout the paper we will use the Einstein summation convention of implicit assumed summation over repeated indices.}:
\bea\label{eq:Euler}
    \partial_t V^i+ \partial_j (V^iV^j)+\partial_i p&=F^i \textrm{ on } (0,T)\times\TTd, i=1,\dots,d\non\\
       \partial_i V^i &=0\quad \textrm{ on }  (0,T)\times \TTd\non\\
           V^i(0,x) &= V^i_0(x) \quad \forall x\in \TTd, i=1,\dots,d
\eea where the unknowns are the velocity field $V=(V_1, \dots, V_d)$ and the pressure $p$, a scalar function. Here $F=(F^1,\dots, F^d)$ is a forcing term.

We  take test functions $\lambda=(\lambda_1,\dots,\lambda_d)$, $\gamma\in C^\infty ([0,T]\times \TTd)$  and such that:

\be
\partial_i \lambda_i(t,x)=0,\forall (t,x)\in [0,T]\times \TTd,  \lambda(T,x)=0,\forall x\in \TTd
\ee

Using these we consider an {\it extended}  weak formulation functional
\bea
\hat{S}_H [V,\lambda,\gamma;\bV]:=&\int_0^T\iTTd \left(V^i\partial_t \lambda_i+\frac{1}{2}V^iV^j(\partial_j\lambda_i+\partial_i\lambda_j) \right)\,dtdx+\iTTd \lambda_i(0,x)V^i_0(x),dx\non\\
&+\int_0^T\iTTd 
V^i(t,x)\partial_i\gamma(t,x)\,dtdx+\int_0^T\iTTd F^i(t,x)\lambda_i (t,x)\,dtdx \non\\
&+ \int_0^T\iTTd  H \bigg(V(t,x), \bV(t,x)\bigg)\,dtdx
\eea 
which coincides with the usual weak formulation functional but for the last  term defined in terms of the  
 auxiliary function $H(V,\bar V)$, involving the unknown velocity $V$ and  a new function $\bar V$ that we will refer to as a {\it base state}.  Both  $H$ and $\bar V$  will be clarified subsequently. For concreteness it can be convenient for now to think in terms of a quadratic function, the one that we will be using,  namely when $H$ is given by 

\be\label{def:Hquadratic0}
\mcH(V):=\frac{1}{2} a_V(V^i-\bV^i)(V^i-\bV^i)
\ee for some $a_V>0$.

We distinguish two different types of variables (leaving aside for now the $H$ and $\bar V$) :

\begin{itemize}
\item the unknowns of the original problem, namely the velocity $(V^1,\dots,V^d)$. We will refer to these  as {\it the primal variables}.
\item the test functions $\lambda,\gamma$ which we will refer to as {\it the dual variables } and we will denote them by $D:=(\lambda,\gamma)$. We will further denote by $\mathcal{D}=(\lambda,\nabla_x\lambda,\partial_t\lambda,\gamma,\nabla_x\gamma)$ the dual variables together with their derivatives.
\end{itemize}

It will be further convenient to denote:

\be
\mathcal{L}_H (V,\mcD;\bV):=V^i\partial_t \lambda_i+\frac{1}{2}V^iV^j(\partial_j\lambda_i+\partial_i\lambda_j)+V^i\partial_i\gamma+ H(V, \bV)
\ee

Thus, the extended weak formulation functional $\hat{S}_H $ can be  more simply written with this notation as:

\bea\label{def:dualEuler}
\hat{S}_H[V,\lambda,\gamma;\bV]=\int_0^T\iTTd \mathcal{L}_H (V,\mcD,\bV)(t,x)\,dtdx&+\int_{\TTd}\lambda_i(0,x)V^i_0(x)\,dx\non\\
&+\int_0^T\iTTd F^i(t,x)\lambda_i (t,x)\,dtdx 
\eea

 The main requirement we impose on the function $H$ is that it allows to express the primal variable $V$ as a function of the dual variables, namely we want to have a function $V^H:=V^H(\mathcal{D})$ such that one can solve the equation:
 
\be\label{defPHE0}
 \frac{\partial\mathcal{L}_H}{\partial V}(V^H(\mcD),\mcD,\bV)=0
\ee

Concretely, in the case of quadratic $\mathcal{H}$ considered above the equation \eqref{defPHE0} becomes:

 \bea\label{eq:VH0}
 a_V(V^{i,\mcH}-\bar V^i)+\partial_i\gamma+\partial_t \lambda_i+(\partial_j \lambda_i+\partial_i \lambda_j) V^{j,\mcH}=0
 \eea
 
which we can rewrite as:

\bea
[a_V\delta_{ij}+(\partial_j \lambda_i+\partial_i \lambda_j)]V^{j,\mcH}&=-\partial_i\gamma-\partial_t \lambda_i+a_V\bV^i
\eea 
 
 Thus, if we denote 
 
 \be\label{def:BN}
 N_{ij}(a_V,B):=a_V\delta_{ij}+\partial_i\lambda_j+\partial_j\lambda_i
 \ee 
where $B$ is the symmetric matrix whose elements are $\frac12(\partial_j \lambda_i+\partial_i \lambda_j)$, 
 we have:
 
 \bea\label{eq:VHsimple}
 V^{i,\mcH}:=(N^{-1})^{ij}\left(-\partial_j\gamma-\partial_t \lambda_j+a_V \bV^j \right) 
\eea  so $V^\mcH$ is now expressed just in terms of the dual variables .\footnote{It should be noted that in here we assumed without comment that the matrix $N$ is invertible (cf., \cite[Sec.~2; (4) in Sec.~7]{acharyaQAM}), but in the rigorous construction we will need to carefully address this issue.}

We define  now a {\it ``dual functional"} just by replacing the primal variable $V$ by the one defined in terms of the dual variable, namely $V^H(\mcD)$ specifically

\bea\label{defdualS}
S_H[\lambda,\gamma; \bV]=\int_0^T\iTTd \mathcal{L}_H (V^H(\mcD),\mcD,\bV)(t,x)\,dtdx&+\int_{\TTd}\lambda_i(0,x)V^i_0(x)\,dx\non\\
&+\int_0^T\iTTd F^i(t,x)\lambda_i (t,x)\,dtdx 
\eea

What we have achieved so far  through this procedure is to create a rather more complicated functional, namely $S_H[\cdot;\bar V]$ but which however now depends {\it only} on the dual variables and on some parameter function, $\bar V$ which we will refer to as being {\it the base state}.

We aim  to study this dual functional by means of variational methods. It turns out that, because of our construction for {\it any base state }$\bar V$ we have that {\it any sufficiently regular critical point $D$} of $S_H$,  will provide, through the transformation $V^H(\mathcal{D})$ and thanks to the property \eqref{defPHE0}  a solution of the original equation. Indeed, we have the first variation:

\bea
\delta S_H[\lambda,\gamma; \bV](\delta\lambda,\delta\gamma):=&\int_0^T\iTTd \frac{\partial \mathcal{L}_H (V^H(\mcD),\mcD,\bV)}{\partial V}\frac{\partial V^H(\mcD),\mcD,\bV)}{\partial \mcD}\delta\mcD\,dtdx+\non\\
&+\int_0^T\iTTd\frac{\partial\mathcal{L}_H (V^H(\mcD),\mcD,\bV) }{\partial\mcD}\delta\mcD+\int_{\TTd}\delta\lambda_i(0,x)V^i_0(x)\,dx\non\\
&+\int_0^T\iTTd F^i(t,x)\delta\lambda_i (t,x)\,dtdx 
\eea  where $\delta\mcD=(\delta\lambda,\delta\gamma,\delta\nabla_x\gamma,\delta\partial_t\lambda,\delta\nabla\lambda)$. Let us note that the first integral vanishes because of our requirement \eqref{defPHE0}. Furthermore as $\mathcal{L}_H$ is necessarily affine in its second variable, $\mcD$, then the second integral provides the weak formulation of the system  \eqref{eq:Euler} with $V$ replaced by $V^H(\mcD)$. 

Indeed, in the case of using the quadratic auxiliary functional $\mcH$, we have:

\bea
\frac{\partial\mathcal{L}_\mcH (V^\mcH(\mcD),\mcD,\bV)}{\partial\mcD}\delta \mcD=V^{i,\mcH}\partial_i\delta\gamma+V^{i,\mcH}\partial_t\delta\lambda_i+V^{i,\mcH }V^{j,\mcH} \partial_j\delta\lambda_i
\eea 
and correspondingly:

\bea
\delta S_\mcH[\lambda,\gamma; \bV](\delta\lambda,\delta\gamma):=&-
\int_0^T\iTTd \left[\partial_t V^{i,\mcH}(\mcD,\bV)+\partial_j\big(V^{i,\mcH}(\mcD,\bV)V^{j,\mcH}(\mcD,\bV)\big)\right]\delta\lambda_i\,dxdt\non\\
&-\int_0^T\iTTd \partial_i V^{i,\mcH}(\mcD,\bV)\delta\gamma \,dxdt+\int_{\TTd}\delta\lambda_i(0,x)\big(V^i_0-V^{i,\mcH}(\mcD,\bV)\big)\,dx\non\\
&+\int_0^T\iTTd F^i(t,x)\delta\lambda_i (t,x)\,dtdx.
\eea

 It is also worth noting from \eqref{eq:VHsimple} and the considerations immediately above that for $\bar{V}$ a weak solution to the Euler equations, $V^\mcH = \bar{V}$ for $D = 0$, and therefore $D = 0$ is a critical point of $S_H$. This is a consistency check showing that each weak solution of Euler can be recovered by mapping, via \eqref{eq:VHsimple}, a critical point of at least one dual functional designed by our scheme. These ideas have been used in \cite{KA1,KA2} to compute approximate solutions to a variety of relatively simpler problems as a preliminary test of the feasibility of the scheme.
 \begin{remark}
    A word about our perspective on the construction of the dual variational principles considered here is perhaps in order. Unlike classical variational principles, we do not consider ours as meant to shed light on the physics of the problem being dealt with, which would require any one principle in the family to recover `all' solutions (in an appropriately defined sense) of the underlying PDE model. Instead, we consider the whole family of dual variational principles as essentially a mathematical tool to facilitate the analysis, approximation, and discovery of solutions to the PDE model, by making as many choices from within the family as required in a systematic way. That this may be possible is demonstrated in computational work associated with the technique, e.g., \cite{KA2, kpa}, with the second treating a non-standard problem (a nonlinear differential-difference equation demonstrating both dispersive and solitonic behavior, posed without boundary conditions). Thus, the fact that our consistency result mentioned above  and Thm.~\ref{thm:consistency} allows consistency for global-in-time solutions with at least one principle in the family is a property we find more useful than a single variational principle providing consistency for all solutions of the PDE problem over short times.
 \end{remark}

\subsection{Connection to Brenier's construction}

We show now how the previously considered construction extends the ideas of Y. Brenier in his work \cite{Brenier-CMP}. We consider the quadratic function $\mathcal{H}$, as defined in \eqref{def:Hquadratic0} and denote $E_i=\partial_i\gamma+\partial_t\lambda_i $ and $B_{ij}:=\frac{1}{2}(\partial_i\lambda_j+\partial_j\lambda_i)$. We note that we can write the term involving the initial data in terms of these variables as:

\be\label{rel:initialEredone}
\iTTd \lambda_i(x,0)V^i_0(x),dx=-\int_0^T\iTTd  \partial_t\lambda_i(s,x)V^i_0(x)\,dsdx=-\int_0^T\iTTd E_i\cdot V^i_0\,dtdx
\ee 
where for the first equality we used the fundamental theorem of calculus in the time variable and the fact that $\lambda(x,T)=0$ while for the second equality we used that $\nabla\cdot V^0=0$.

In order to connect with the case studied by Brenier, let us consider $\bar V=F=0$ and set $a_V=1$. Then the functional \eqref{def:dualEuler} reduces to (where we use \eqref{eq:VHsimple} to express $V^H$ in terms of $N$ and $E$):
\bea
\mcE[E,B]=& \int_0^T\int_\Omega \left[-(N^{-1})^{li}E_l E_i+\frac{1}{2}((N^{-1})^{li}E_l)((N^{-1})^{mj}E_m)(N_{ij}-\delta_{ij}) \right]\,dtdx\non\\
&+ \mcH(-(N^{-1})^{li}E_l,0)-\int_0^T\int_\Omega E_iV^i_0\,dtdx\non\\
&=\int_0^T\int_\Omega \left[-(N^{-1})^{li}E_lE_i+\frac{1}{2}(N^{-1})^{li}E_l(N^{-1})^{mj}E_m N_{ij}-\frac{1}{2}(N^{-1})^{li}E_l(N^{-1})^{ji}E_j \right]\,dtdx\non\\
&+ \mcH(-(N^{-1})^{li}E_l, 0)-\int_0^T\int_\Omega E_iV^i_0\,dtdx\non\\
&=\int_0^T\int_\Omega \left[-\frac{1}{2}(\delta_{li}+2B_{li})^{-1}E_lE_i-\frac{1}{2}(N^{-1})^{li}E_l (N^{-1})^{mi}E_m \right)\,dtdx\non\\
&+ \mcH(-(N^{-1})^{li}E_l, 0)-\int_0^T\int_\Omega E_iV^i_0\,dtdx\non\\
&=-\frac{1}{2}\int_0^T\int_\Omega (\delta_{li}+2B_{li})^{-1}E_lE_i\,dtdx-\int_0^T\int_\Omega E_iV^i_0\,dtdx
\eea 

which is exactly the functional appearing in equation $(2.11)$ in Theorem $2.2$ in the paper \cite{Brenier-CMP} of Y. Brenier.

\subsection{Heuristic construction of the dual problem for the Navier-Stokes equations}

We write the incompressible  Navier-Stokes system in  first order form as follows:
\begin{subequations} \label{eq:NS}
\begin{align}
        \partial_t V^i+ \partial_j (V^iV^j)+\partial_i p&=\partial_j( W^{ij})+F^i \quad \textrm{ on }(0,T)\times\TTd, i=1,\dots,d\\
  \nu(\partial_j V^i+\partial_i V^j)&=W^{ij}\quad \textrm{ on }(0,T)\times\TTd, i,j=1,\dots,d\\
       \partial_i V^i &=0\quad \textrm{ on } (0,T)\times \TTd\\
           V^i(x,0) &= V^i_0(x) \quad x\in \TTd, i=1,\dots,d
\end{align}
\end{subequations}
and corresponding to this we consider the {\it extended} weak functional.

As for the test functions, we assume again that $\lambda$ is a smooth divergence free vector field defined in $[0,T]\times \TTd$, vanishing at $t=T$, $\gamma$ is a smooth function defined in $[0,T]\times \TTd$ and $\chi$ is a smooth  symmetric matrix field. 
\bea
\hat{S}[V,W,\gamma,\lambda,\chi;\bar V,\bar W]:=&\int_0^T\iTTd\left(V^i\partial_i\gamma+V^i\partial_t \lambda_i+\frac{1}{2}V^iV^j(\partial_j\lambda_i+\partial_i\lambda_j)- W^{ij}\partial_j \lambda_i \right)\,dtdx\non\\
&+\int_0^T\iTTd\left( 2\nu V^i\partial_j \chi_{ij}+W^{ij}\chi_{ij}\right)\,dtdx+\non\\
&+ \int_0^T\iTTd F^i(t,x)\lambda_i (t,x)\,dtdx+\iTTd  \lambda_i(x,0)V^i_0(x),dx\non\\
&+\int_0^T\iTTd\fH(V, W;\bV,\bW))\,dtdx
\eea 
where for simplicity we will limit ourselves to a quadratic auxiliary function 
\be
\fH(V,W; \bV,\bW):=\frac 12 a_V|V-\bV|^2+\frac 12 a_W|W-\bW|^2.
\ee 

Analogously to the Euler case it will be convenient to consider the integrand:
\bea
\mathcal{L}_{\fH} (V,W,\mcD;\bV,\bW):=&V^i\partial_t \lambda_i+\frac{1}{2}V^iV^j(\partial_j\lambda_i+\partial_i\lambda_j)+V^i\partial_i\gamma\non\\
&+ 2\nu V^i\partial_j \chi_{ij}+W^{ij}\chi_{ij}- W^{ij}\partial_j \lambda_i \non\\
&+\fH(V,W; \bV,\bW)
\eea where we will denote this time $\mathcal{D}=(\lambda,\nabla_x\lambda,\partial_t\lambda,\gamma,\nabla_x\gamma, \chi,\nabla_x \chi)$ the new dual variables together with their derivatives.

We have that the equation \eqref{defPHE0} becomes in this case:

\bea
\frac{\partial \mathcal{L}_{\fH} }{\partial V^i}=\partial_t \lambda_i+\partial_i \gamma+2\nu\partial_l\chi_{il}+(\partial_i\lambda_j+\partial_j\lambda_i)V^j+a_V(V^j-\bV^j)&=0\non\\
\frac{\partial \mathcal{L}_{\fH}}{\partial W^{ij}}=\chi_{ij}-\frac{1}{2}(\partial_j\lambda_i+\partial_i\lambda_j)+a_W(W^{ij}-\bar W^{ij})&=0
\eea and thus we have:

\bea\label{defVW}
V^{i,\fH}:&=-(N^{-1})^{ij}(\mbE_j-a_V\bV_j)\non\\
W^{ij,\fH}:&=\bW^{ij}+\frac{1}{a_W}( B_{ij}-\chi_{ij})
\eea where, similarly as before,  we denoted:

 \be
 B_{ij}:=\frac{1}{2}(\partial_i\lambda_j+\partial_j\lambda_i); N_{ij}(a_V):=a_V[\delta_{ij}+\frac{2}{a_V}B_{ij}]
 \ee and furthermore
 
 $$
 \mbE_i:=\partial_t \lambda_i+\partial_i \gamma+2\nu \partial_l\chi_{il}
 $$

Thus, with these notations, the dual functional becomes:

\bea\label{defdualNS}
S_\fH[\lambda,\gamma,\chi; \bV,\bW]=&\int_0^T\iTTd \mathcal{L}_{\fH} (V^{\fH}(\mcD), W^\fH(\mcD),\mcD,\bV, \bW)(t,x)\,dtdx\non\\
&+\int_{\TTd}\lambda_i(0,x)V^i_0(x)\,dx+\int_0^T\iTTd F^i(t,x)\lambda_i (t,x)\,dtdx
\eea
\bea
=&\int_0^T\iTTd V^i\mbE_i+\frac{1}{2} V^iV^j (2B_{ij}+a_V\delta_{ij})+\frac{a_V}{2} \bV^i(\bV^i-2V^i)\,dtdx\non\\
&+\int_0^T\iTTd \frac{a_W}{2}W^{ij}W^{ij}+W^{ij}(\chi_{ij}- B_{ij})\,dtdx\non\\
&+\int_0^T\iTTd\frac{a_W}{2}\bW^{ij}(\bW^{ij}-2W^{ij})\,dtdx\non\\
&-\int_0^T\iTTd V^i_0(x)\mbE_i(t,x)+2\nu\partial_l V^i_0\chi_{il}\,dtdx\non\\
&+ \int_0^T\iTTd F^i(t,x)\lambda_i (t,x)\,dtdx
 \eea 
where the penultimate integral, involving the initial data, is obtained by similar arguments as in \eqref{rel:initialEredone} in the previous section, taking into account the new definition of $\mbE$.

\subsection{Dual functional with pressure as an independent field (and quadratic auxiliary potential)}
For smooth dual variables $(\lambda,\chi,\gamma)$, $\lambda: \TTd \times (0,T) \to \R^d$ (not necessarily solenoidal/incompress-ible), $\chi:\TTd \times (0,T) \to \R^{d \times d}_{sym}$, $\gamma: \TTd \times (0,T) \to \R$ consider the extended weak form for \eqref{eq:NS} given by

\begin{align*}    \hat{S}_{\mathds{H},p}[V,W,p,\lambda,\chi,\gamma;\bV,\bW,\bp] & = \int_0^T \iTTd  V^i \p_i \gamma  +V^i \p_t \lambda_i + V^iV^j\p_j\lambda_i \,dtdx \\
    &+\int_0^T \iTTd 2\nu V^i \p_j \chi_{ij}+  
    W^{ij}\chi_{ij}-W^{ij}\p_j\lambda_i\,dtdx\\
    & +\int_0^T \iTTd  p \p_i \lambda_i +\lambda_i F^i \,dtdx  + \iTTd V^i_0(x,0) \lambda_i(x,0) \,dx  \\
    & + \frac{1}{2} \int_0^T \iTTd a_V |V - \bV|^2 + a_W |W -\bW|^2 + a_p (p - \bp)^2 \, dt dx 
\end{align*}

Then
\begingroup
\allowdisplaybreaks
\begin{align}
    \hat{S}_{\mathds{H},p} & = \int_0^T \iTTd \big((V^i - \bV^i) + \bV^i \big) ( \p_i \gamma + 2\nu \p_j \chi_{ij} + \p_t \lambda_i) \, dt dx \notag\\
    & \quad + \int_0^T \iTTd \big( (W^{ij} - \bW^{ij}) + \bW^{ij} \big) \chi_{ij}\, dt dx \notag\\
    & \quad + \int_0^T \iTTd (V^i - \bV^i) \, \p_j \lambda_i \, (V^j - \bV^j)  + (V^i \bV^j +\bV^i V^j-\bV^i \bV^j) \p_j \lambda_i \, dt dx \notag\\
    & \quad - \int_0^T \iTTd  \big( (W^{ij} - \bW^{ij}) + \bW^{ij}\big) \p_j \lambda_i \, dtdx + \int_0^T \iTTd  \big( (p -\bp) + \bp\big) \p_i \lambda_i \, dt dx \notag\\
    & \quad + \frac{1}{2} \int_0^T \iTTd a_V |V - \bV|^2 + a_W |W -\bW|^2 + a_p (p - \bp)^2 \, dt dx \notag\\
    & \quad   + \iTTd V^i_0(x,0) \lambda_i(x,0) \,dx +\int_0^T \iTTd \lambda_i F^i \, dt dx. \notag
\end{align}
\endgroup
Noting that 
\begin{align}
    ( V^i \bV^j+\bV^i V^j-\bV^i \bV^j) \p_j \lambda_i & =  \bV^j(V^i-\bV^i) \p_j \lambda_i +\bV^i \bV^j \p_j \lambda_i +\bV^i (V^j-\bV^j )\p_j \lambda_i 
\end{align}
and defining, for $\mathcal{D} := (\lambda,\nabla_x\lambda,\partial_t\lambda,\gamma,\nabla_x\gamma, \chi,\nabla_x \chi)$,
\begin{align}
    P_i (\mathcal{D};\bV) & := \p_i \gamma +2\nu \p_j \chi_{ij}+ \p_t \lambda_i+2 \bV^j B_{ij} (\mathcal{D}); \notag \\
    \qquad B_{ij} (\mathcal{D}) & := \frac{1}{2} \big( \p_j \lambda_i + \p_i \lambda_j \big); \qquad Q_{ij}(\mathcal{D}) :=  B_{ij}(\mathcal{D}) -  \chi_{ij}; \qquad r(\mathcal{D}) :=  \p_i \lambda_i \notag
\end{align}
we have:
\begingroup
\allowdisplaybreaks
\begin{align}
    \hat{S}_{\mathds{H},p} & =  \int_0^T \iTTd (V^i - \bV^i) P_i + (V^i - \bV^i) B_{ij} (V^j - \bV^j) + \frac{1}{2} a_V (V^i - \bV^i) \delta_{ij} (V^j - \bV^j)\, dt dx \notag\\
    & \quad + \int_0^T \iTTd \bV^i ( P_i - 2 \bV^j B_{ij}) +\bV^i \bV^j \p_j \lambda_i \, dt dx - \int_0^T \iTTd \lambda_i F^i \, dt dx \notag\\
    & \quad + \int_0^T \iTTd  \frac{1}{2} a_W (W^{ij} - \bW^{ij}) (W^{ij} - \bW^{ij}) -(W^{ij} - \bW^{ij}) Q_{ij}\, dt dx - \int_0^T \iTTd \bW^{ij} Q_{ij} \, dt dx \notag\\
    & \quad + \int_0^T \iTTd (p - \bp) r + \frac{1}{2} a_p (p - \bp)^2 \, dt dx + \int_0^T \iTTd \bp r \, dt dx + \iTTd V^i_0(x,0) \lambda_i(x,0) \,dx.
\end{align}
\endgroup
Let the integrand of $\hat{S}_{\mathds{H},p}$ be denoted as $\mathcal{L}_{\mathds{H},p}$.
Then the dual-to-primal mapping is obtained from the equations
\begingroup
\allowdisplaybreaks
\begin{align*}
\frac{\p \mathcal{L}_{\mathds{H},p}}{\p V^i}&: \quad a_V \left( \delta_{ij} + \frac{2}{a_V} B_{ij} (\mathcal{D}) \right) \left(V^{j,\mathds{H}} - \bV^j \right) =  - P_i(\mathcal{D};\bV) \\
\frac{\p \mathcal{L}_{\mathds{H},p}}{\p W^{ij}}&: \quad a_W \left( W^{\mathds{H},ij} - \bW^{ij} \right) = Q_{ij}(\mathcal{D}) \\
\frac{\p \mathcal{L}_{\mathds{H},p}}{\p p}&: \quad a_p \left(p^\mathds{H} - \bp\right) = - r(\mathcal{D}).
\end{align*}
\endgroup
Defining
\[
\mathbb{N}_{ij}(\mathcal{D}):= \delta_{ij} + \frac{2}{a_V} B_{ij}(\mathcal{D}),
\]
the dual functional, $S_{\mathds{H},p}[\lambda, \chi, \gamma]$, is obtained by substituting $(V, W, p)$ by $\left( V^\mathds{H}, W^\mathds{H}, p^\mathds{H} \right)$ in $\hat{S}_{\mathds{H},p}$. Thus,
\begingroup
\allowdisplaybreaks
\begin{align*}
  S_{\mathds{H},p}[\lambda, \chi, \gamma&;\bV,\bW,\bp]  =   \\
  & \int_0^T \iTTd \left(- 1 + \frac{1}{2} \right) a_V (V^{i,\mathds{H}}(\mathcal{D};\bV) - \bV^i) \mathbb{N}_{ij}(\mathcal{D}) (V^{j,\mathds{H}}(\mathcal{D};\bV) - \bV^j) \, dt dx \\
  &  + \int_0^T \iTTd \left(- 1 + \frac{1}{2} \right) a_W (W^{\mathds{H},ij}(\mathcal{D}) - \bW^{ij})(W^{\mathds{H},ij}(\mathcal{D}) - \bW^{ij}) \, dt dx\\
  & + \int_0^T \iTTd \left(- 1 + \frac{1}{2} \right) a_p (p^\mathds{H}(\mathcal{D}) - \bp)^2 \, dt dx \\
  & + \int_0^T \iTTd \bV^i ( P_i(\mathcal{D};\bV) -2 \bV^j B_{ij}(\mathcal{D})) +\bV^i \bV^j B_{ij}(\mathcal{D}) -\bW^{ij} Q_{ij}(\mathcal{D}) + \bp r(\mathcal{D}) \, dt dx \\
  & +\iTTd V^i_0(x,0) \lambda_i(x,0) \,dx - \int_0^T \iTTd \lambda_i F^i \, dt dx \\
  & \qquad \qquad = \\
  & - \frac{1}{2} \int_0^T \iTTd \frac{1}{a_V} P_i(\mathcal{D};\bV) (\mathbb{N}^{-1})^{ij}(\mathcal{D}) P_j(\mathcal{D};\bV) + \frac{1}{a_W} Q_{ij}(\mathcal{D})Q_{ij}(\mathcal{D}) + \frac{1}{a_p} r^2(\mathcal{D}) \, dx dt\\
  & + \int_0^T \iTTd \bV^i P_i(\mathcal{D};\bV) - \bV^i \bV^j B_{ij}(\mathcal{D}) - \bW^{ij} Q_{ij}(\mathcal{D}) + \bp r(\mathcal{D}) \, dt dx \\
  &  +\iTTd V^i_0(x,0) \lambda_i(x,0) \,dx - \int_0^T \iTTd \lambda_i F^i \, dt dx. \\
\end{align*}
\endgroup

It can be shown that the dual system of equations, corresponding to the primal system \eqref{eq:NS} and a `shifted' quadratic for the auxiliary function: 
\[
\mathds{H}(V,W,p; \bar{V},\bar{W},\bar{p}) = \frac{1}{2} \left( a_V |V - \bV|^2 + a_W |W -\bW|^2 + a_p (p - \bp)^2 \right)
\]
including the pressure field, is locally degenerate elliptic in space-time (about an appropriate base state). The system \eqref{eq:NS} may be posed in the form
\begin{equation}\label{eq:deg_ell}
    \sum_\alpha \p_\alpha (\mathcal{F}_{\Gamma \alpha}(U)) + G_\Gamma (U,x) = 0, \qquad \Gamma = 1, \ldots, N^*; \qquad \alpha = 0,\ldots, d,
\end{equation}
where $ N^* = d + d^2 + 1$, $x = (t,x_1,\ldots, x_d)$, $U = (V,W,p)$ and
\begingroup
\allowdisplaybreaks
\begin{align*}
    & \mathcal{F}_{\Gamma 0} = V_\Gamma; \qquad \mathcal{F}_{\Gamma \alpha} = V_\Gamma V_\alpha + p \delta_{\Gamma \alpha} -  W_{\Gamma \alpha} ; \qquad G_\Gamma = - F_\Gamma; \\
    & \qquad \qquad \qquad \qquad \mbox{for} \quad  \Gamma = 1, \ldots, d; \quad \alpha = 1, \ldots, d.\\
    & \mathcal{F}_{\Gamma 0} = 0; \qquad \mathcal{F}_{\Gamma k} = \frac{1}{2} V_i; \qquad \mathcal{F}_{\Gamma i} = \frac{1}{2} V_k; \qquad \mathcal{F}_{\Gamma s}  = 0 \quad \mbox{for} \  s \neq i, s \neq k; \qquad G_\Gamma = - W_{ik}; \\
    & \qquad \qquad \qquad \qquad \mbox{for} \quad i = 1,\ldots,d; \quad k = 1,\ldots,d; \qquad \Gamma := d + (k-1)d + i.\\
    & \mathcal{F}_{\Gamma 0} = 0; \qquad \mathcal{F}_{\Gamma \alpha} = V_\alpha; \qquad G_\Gamma = 0\\
    & \qquad \qquad \qquad \qquad \mbox{for} \quad \Gamma = d + d^2 + 1; \qquad \alpha = 0,\ldots, d.
\end{align*}
\endgroup
This representation puts it in the form examined in \cite[Sec.~3]{acharya2023hidden} to assess the ellipticity of the dual E-L equations corresponding to a primal system of the form \eqref{eq:deg_ell}. Following that argument, the dual PDE system corresponding to \eqref{eq:NS}, with the dual field $D$ comprising $N^*$ components, can be  shown to be locally degenerate elliptic about the dual state $D = 0$.

\section{Variational Dual Solutions for Euler}
\label{sec:Euler}

In order to make rigorous the previous heuristical constructions, let us begin by noting that one of the main benefits of working with a variational formulation is related to the possibility of obtaining solutions as extremal points. In calculus of variations these typically are  minimizers but it will turn out that thanks to the structure of the problem we consider it is convenient to work in fact with both minimizers and maximizers.

The first natural thing to attempt is to solve the problem defining the relationship between the original and the dual problem, namely \eqref{defPHE0}. We will do this as a minimisation problem, namely  we will look into 

\bea \textrm{inf}_V &\int_0^T\iTTd \left(V^i\partial_t \lambda_i+\frac{1}{2}V^iV^j(\partial_j\lambda_i+\partial_i\lambda_j) \right)\,dtdx+\iTTd \lambda_i(x,0)V^i_0(x),dx\non\\
&+\int_0^T\iTTd
V^i(t,x)\partial_i\gamma(t,x)\,dtdx+\int_0^T\iTTd F^i(t,x)\lambda_i (t,x)\,dtdx \non\\
&+\int_0^T\iTTd  \mcH\bigg(V(t,x), \bV(t,x)\bigg)\,dtdx
\eea where $\mcH$ is the quadratic function defined in \eqref{def:Hquadratic0}.

It is clear that the integrals will play no role here and in fact the minimisation is pointwise and the minimising $V^\mcH$ will satisfy pointwise the equation \eqref{eq:VH0}. We will then aim to solve the dual problem, by finding  an extremal point of the dual energy functional (in which $V$ appears as a function of $\lambda,\gamma$). It turns out that in fact the natural structure of the problem, of a concave functional type, requires to look into a  maximimisation problem in the dual variable. 

Thus we are lead, by building on the ideas from \cite{Brenier-CMP}, to considering the following $\textrm{sup-inf}$ problem:

\bea
&\sup_{(\lambda,\gamma)}\inf_V \int_0^T\iTTd \frac{1}{2} V^iV^j(\partial_j\lambda_i+\partial_i\lambda_j+a_V\delta_{ij})+V^i\left(\partial_t \lambda_i+
\partial_i\gamma -a_V \bar{V}_i\right)\,dtdx+\non\\
&+\int_0^T\iTTd \frac{1}{2}a_V \bV^i(t,x)\bV_i(t,x)+\frac{1}{T}\lambda_i(x,0)V^i_0(x)+F^i(t,x)\lambda_i(t,x)\,dtdx
\eea

Let us first observe that in order to have that the infimum in $V$ exists,  we need to have pointwise the following positive semidefinitness restriction on the matrix $a_V\Id+2B$, where $B_{ij}:=\frac{1}{2}(\partial_i\lambda_j+\partial_j\lambda_i)$:

\be
a_V \Id+2B\ge 0
\ee which  implies in particular, denoting by $f_i, i=1,\dots,d$ the eigenvalues of $B$, that 

$$0\le a_V+2f_i$$ Using this positivity together with the fact that $\sum_{j=1}^d f_j=0$ (due to $\divop\lambda=\textrm{tr}B=0$) gives us

$$a_V+2f_i\le \sum_{j=1}^d (a_v+2f_j)=da_V $$ hence

\be
-\frac{a_V}{2}\le f_i\le \frac{d-1}{2}a_V, \forall i=1,\dots,d
\ee which implies in particular

\be
|\nabla^{sym}\lambda|\le \sqrt{\sum_{j=1}^d | f_j|^2}\le \sqrt{d}\frac{(d-1)}{2}a_V, \forall i,j=1,\dots,d
\ee where we denote $\nabla^{sym}\lambda:=\frac{1}{2}(\nabla\lambda +(\nabla \lambda)^T)$ and in here $|A|=\sqrt{\sum_{i,j=1}^d a_{ij}^2}$ denotes the Frobenius norm of the symmetric matrix $A=(a_{ij})_{i,j=1,\dots,d}$.

Thus, it is natural to require that $\nabla^{sym}\lambda\in L^\infty(0,T;L^\infty)(\TTd))$. Furthermore, in order to make sense of the term $\int_0^T \iTTd V_i\partial_t\lambda_i$ for $V\in L^2(0,T; L^2(\TTd))$ we need to assume $\partial_t\lambda\in L^2(0,T; L^2(\TTd))$. Similarly because of the term  $\int_0^T\iTTd V_i\partial_i\gamma \,dtdx $ one needs to take $\nabla\gamma\in L^2(0,T,L^2(\TTd))$. 

Thus it will be convenient to work with functions in the space:

\bea\label{def:mathfrakR}
\mcR:=\{(\lambda,\gamma);&\lambda \in H^1_T(L^2); \nabla^{sym}\lambda\in L^\infty_T(L^\infty),\rm{div}\lambda=0; \lambda(T,\cdot)=0,\, \rm{ a.e.} \,x\in\TTd,\non\\
&\gamma\in L^2_T(H^1),\int_{\TTd}\gamma(t,x)dx=0,\rm{ a.e.}t\in [0,T]\}
\eea

Then we  have the following analogue of Brenier's Theorem $2.2$ namely:

\begin{theorem} 
\label{existenceEuler}
Let $V_0\in L^2(\TTd,\mathbb{R}^d)$ be 
a divergence-free vector field 
of zero spatial mean. Furthermore let $F,\bV\in L^2_T(L^2(\TTd))$. Then the sup-inf problem 
\bea\label{def:infsuprigorous}
\mcJ_{E}[\bV](V_0)=& \sup_{(\lambda,\gamma)\in\mathfrak{R}}\inf_V \Bigg(  \int_0^T\iTTd \frac{1}{2} V^iV^j(\partial_j\lambda_i+\partial_i\lambda_j+a_V\delta_{ij})\,dtdx\non\\
&+\int_0^T\iTTd V^i\left(\partial_t \lambda_i+
\partial_i\gamma -a_V \bar{V}_i\right)\,dtdx+\non\\
&+\int_0^T\iTTd \frac{1}{2}a_V (\bV^i\bV_i+F^i\lambda_i)(t,x)+\frac{1}{T}\lambda_i(x,0)V^i_0(x)\,dtdx \Bigg) 
\eea always admits a solution $(\lambda,\gamma)\in\mcR$ where $\mcR$ is defined in \eqref{def:mathfrakR}.

\end{theorem}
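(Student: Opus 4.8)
The plan is to run the direct method of the calculus of variations on the \emph{concave} functional obtained by carrying out the inner minimisation, exploiting the fact that the constraint which forces the inner infimum to be finite simultaneously supplies an a priori bound. First I would set, for $(\lambda,\gamma)\in\mcR$,
\[
G[\lambda,\gamma]:=\inf_{V}\Phi[V,\lambda,\gamma],
\]
where $\Phi$ denotes the inner integral functional appearing in \eqref{def:infsuprigorous}. The key structural observation is that for each \emph{fixed} $V\in L^2_T(L^2)$ the map $(\lambda,\gamma)\mapsto\Phi[V,\lambda,\gamma]$ is \emph{affine} and strongly continuous on $\mcR$, since $B=\nabla^{sym}\lambda$, $\partial_t\lambda$, $\nabla\gamma$ and $\lambda(0,\cdot)$ all enter linearly. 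Hence $G$, being an infimum of affine continuous functionals, is concave and upper semicontinuous; moreover $G[\lambda,\gamma]>-\infty$ forces the pointwise constraint $N:=a_V\Id+2B\ge0$ a.e., for otherwise scaling $V$ along a negative eigendirection of $N$ drives $\Phi$ to $-\infty$. Thus $\mcJ_E[\bV](V_0)=\sup_{\mathcal A}G$, where $\mathcal A:=\{(\lambda,\gamma)\in\mcR:\ a_V\Id+2B\ge0\ \text{a.e.}\}$ is \emph{convex} and nonempty (it contains $(0,0)$). Working with this inf-of-affine form rather than with the explicit $N^{-1}$ expression is precisely what lets me avoid estimating the inverse $N^{-1}$, which degenerates on $\partial\mathcal A$.

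Next I would establish coercivity along a maximising sequence $(\lambda_n,\gamma_n)$. Membership in $\mathcal A$ together with $\divop\lambda_n=0$ gives, exactly by the eigenvalue computation carried out in the excerpt, the \emph{automatic} uniform bound $\|\nabla^{sym}\lambda_n\|_{L^\infty_T(L^\infty)}\le\sqrt d\,\tfrac{d-1}{2}a_V$. On $\mathcal A$ the inner minimum equals $-\tfrac12\int_0^T\iTTd(N^{-1})^{ij}b_ib_j\,dtdx$ plus terms linear in $(\lambda,\gamma)$, with $b:=\partial_t\lambda+\nabla\gamma-a_V\bV$; since the eigenvalues of $N$ are at most $d\,a_V$, the quadratic term is bounded above by $-\tfrac{1}{2d a_V}\|b\|_{L^2_T(L^2)}^2$. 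As $G[\lambda_n,\gamma_n]$ is bounded below (its value exceeds $G[0,0]-1$ eventually), this forces $\|b_n\|_{L^2}$ to be bounded, after absorbing the linear terms by Young's inequality. Here $\|\lambda_n\|_{L^2_T(L^2)}$ is itself controlled by $\|\partial_t\lambda_n\|_{L^2}$ via $\lambda_n(T,\cdot)=0$ and integration in time, while $\partial_t\lambda_n$ is the solenoidal part of $b_n$, which closes the absorption. The decisive point is that $\partial_t\lambda_n$ is divergence-free whereas $\nabla\gamma_n$ is a gradient, so the Leray--Helmholtz decomposition is $L^2$-orthogonal and splits $\|b_n\|_{L^2}^2$ into the sum of the squared norms of its solenoidal and gradient parts; with $\bV\in L^2$ this yields \emph{separate} uniform $L^2$ bounds on $\partial_t\lambda_n$ and on $\nabla\gamma_n$. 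Combined with the $L^\infty$ bound on $\nabla^{sym}\lambda_n$, Korn's inequality on $\TTd$ for the divergence-free, zero-mean $\lambda_n$ controls $\nabla\lambda_n$, so $\lambda_n$ is bounded in $H^1_T(H^1)$.

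Then I would extract a subsequence with $\lambda_n\rightharpoonup\lambda$ in $H^1_T(H^1)$, $\nabla^{sym}\lambda_n\overset{\ast}{\rightharpoonup}\nabla^{sym}\lambda$ in $L^\infty_T(L^\infty)$, and $\nabla\gamma_n\rightharpoonup\nabla\gamma$ in $L^2_T(L^2)$. The limit lies in $\mcR$ because $\divop\lambda=0$ and $\lambda(T,\cdot)=0$ are linear and stable under weak limits, and it lies in $\mathcal A$ because $N\ge0$ is a convex pointwise condition: testing $\int(a_V\Id+2B_n)\,\xi\otimes\xi\,\mathbf 1_E\ge0$ and passing to the limit preserves it. Finally, regarding $G$ as a concave functional on the reflexive space in which these weak convergences take place, its superlevel sets $\{G\ge c\}$ are convex and strongly closed, hence weakly closed by Mazur's theorem, so $G$ is weakly sequentially upper semicontinuous; therefore
\[
\mcJ_E[\bV](V_0)=\limsup_{n}G[\lambda_n,\gamma_n]\le G[\lambda,\gamma]\le\mcJ_E[\bV](V_0),
\]
which shows that $(\lambda,\gamma)\in\mcR$ attains the supremum.

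The main obstacle, and the step requiring the most care, is the coercivity in the \emph{separate} variables $\partial_t\lambda$ and $\nabla\gamma$: the functional only controls the combination $b$, and only the orthogonality of the Leray decomposition rescues the individual $L^2$ bounds. A secondary technical point, cleanly sidestepped by the inf-of-affine formulation, is the degeneracy of $N^{-1}$ on $\partial\mathcal A$: because concavity and upper semicontinuity are obtained for free from the variational definition of $G$, the singular inverse never has to be estimated, and one needs only the harmless fact that $\sup_{\mathcal A}G$ is finite, which follows from the same Young-inequality bound used for coercivity. One should also fix the constant gauges (zero spatial means of $\lambda$ and $\gamma$) so that the Poincaré and Korn inequalities apply.
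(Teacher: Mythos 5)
Your proposal follows essentially the same route as the paper's proof: both run the direct method on the concave dual functional, extract the free $L^\infty$ bound on $B=\nabla^{sym}\lambda$ from the positivity constraint $a_V\Id+2B\ge 0$ together with $\tr B=0$, obtain $L^2$ coercivity in $E=\partial_t\lambda+\nabla\gamma$ from the upper eigenvalue bound $N\le d\,a_V\Id$ after absorbing the linear terms by Young's inequality, and conclude by weak/weak-$*$ compactness of the $\varepsilon$-maximizers plus upper semicontinuity. Your inf-of-affine presentation of $G$ is the mirror image of the paper's representation of $\mcM[E,B]$ as a supremum of linear functionals over pairs $(Z,M)$ with $Z\otimes Z\le M$, and your Helmholtz-orthogonality and Korn steps make explicit what the paper compresses into the assertion that $(E,B)\in L^2\times L^\infty$ is equivalent to $(\lambda,\gamma)\in\mcR$ (note only that $\bV$ need not be solenoidal, so $\partial_t\lambda_n$ is the Leray projection of $b_n+a_V\bV$ rather than of $b_n$; this is a harmless shift by fixed data).

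The one step that does not close as written is the upper semicontinuity via Mazur. For fixed $V\in L^2_T(L^2)$ the coefficient $V^iV^j$ of $\partial_j\lambda_i$ lies only in $L^1$, so $\Phi[V,\cdot]$ is \emph{not} continuous for the strong $H^1_T(H^1)$ topology on $\lambda$; to make it strongly continuous you must include the $L^\infty$ norm of $\nabla^{sym}\lambda$, but then Mazur's theorem yields closedness of the convex superlevel sets only for $\sigma(L^\infty,(L^\infty)^*)$, not for the weak-$*$ topology $\sigma(L^\infty,L^1)$ in which your bounded sequence actually converges, and norm-closed convex sets in $L^\infty$ need not be weak-$*$ closed. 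The repair is immediate and is exactly the paper's mechanism: each affine functional $\Phi[V,\cdot]$ is already continuous for the product of the weak $L^2$ topology (on $\partial_t\lambda$, $\nabla\gamma$, $\lambda$) and the weak-$*$ $L^\infty$ topology (on $\nabla^{sym}\lambda$), because its linear part pairs the unknowns against fixed elements of the respective preduals ($V$, $V_0$, $F\in L^2$ and $V\otimes V\in L^1$); an infimum of such functionals is upper semicontinuous for that topology, and no Hahn--Banach separation is needed. With that substitution your argument is complete and coincides with the paper's.
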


\begin{proof}
We will  follow closely the ideas of the proof of Theorem 2.2 in \cite{Brenier-CMP}, essentially adapting it in terms of the variables $\lambda,\gamma$ that allow to treat the presence of the forcing term and also taking into account the presence of the base state $\bV$. Let us  denote
\bea
E_i:=\partial_t \lambda_i+
\partial_i\gamma, \quad B_{ij}:=\frac{1}{2}(\partial_i\lambda_j+\partial_j\lambda_i)
\eea with $E_i$ similar as in \cite{Brenier-CMP}.

Then the fact that $(\lambda,\gamma)\in\mcR$ implies $E\in L^2_T(L^2)$ and $B\in L^\infty_T(L^\infty)$.

Out of the discussion before the theorem we have:
\begin{equation}\label{semidef}
-a_V\mathbb{I}_d \le 2B\le a_V(d-1)\mathbb{I}_d,
\end{equation}
which provides an  apriori $L^\infty$ bound for $B$ (since $B$ is valued in the set of symmetric
matrices).

Furthermore we note that we can rewrite a bit the term involving the initial data $V_0$ (noting that  we have for a.e. $x\in\T^d$ that  $\lambda(0,x)=-\int_0^T \partial_t\lambda(s,x)dx$ since $\lambda(T,x)=0$):

\bea
\iTTd V^i_0(x)\lambda_i(0,x)\,dx&=-\int_0^T\iTTd V^i_0(x)\partial_t\lambda_i(s,x)\,dsdx\non\\
&=-\int_0^T\iTTd V^0_i(x)E_i(t,x)\,dtdx\non
\eea (where for the last equality we used that $\partial_i V^i_0=0$).

Moreover, we can deduce an $L^2$ bound on $\lambda$. Indeed, using that $\lambda(T,\cdot)\equiv 0$ we have out of the definition of $E$:

\be
\lambda_i(x,t)=-\int_t^T \Bigg(E_i+\partial_i(-\Delta)^{-1}(\nabla\cdot E)\Bigg) ds
\non
\ee hence

\begin{align}
\|\lambda\|_{L^2_T(L^2)}^2&=\sum_{i=1}^3\iTTd\int_0^T|\int_t^T \Bigg(E_i+\partial_i(-\Delta)^{-1}(\nabla\cdot E)\Bigg) ds|^2\,dtdx\non\\
&\le\sum_{i=1}^3 \iTTd\int_0^T \sqrt{(T-t)}\Bigg(\int_t^T |E_i+\partial_i(-\Delta)^{-1}(\nabla\cdot E)|^2 ds\Bigg)\,dtdx\\
&=\sum_{i=1}^3\iTTd\int_0^T -\frac{2}{3}\frac{d}{dt}(T-t)^{\frac{3}{2}}\Bigg(\int_t^T |E_i+\partial_i(-\Delta)^{-1}(\nabla\cdot E)|^2 ds\Bigg)\,dtdx\non\\
&=\sum_{i=1}^3\iTTd\int_0^T \frac{2}{3}(T^{\frac{3}{2}}-(T-t)^{\frac{3}{2}})|E_i+\partial_i(-\Delta)^{-1}(\nabla\cdot E)|^2\, dtdx\non\\
&\le C(T)\|E\|_{L^2_T(L^2)^2}^2\label{est:lambdaL2}
\end{align}

We notice that we cannot invert the matrix $N(a_V,B):= a_V\Id +2B$, since it is only positive semidefinite, by \eqref{semidef}. However, similarly as in \cite{Brenier-CMP}, we can use a duality argument and write pointwise in  $(t,x)$:
$$
E\cdot(a_V\mathbb{I}_d+2B)^{-1}\cdot E
=\sup_{M,Z} \;\;\Big\{2E_iZ^i-(a_V\delta_{ij}+2B_{ij})M^{ij}\Big\}
$$
where $M$ and $Z$ are respectively $d\times d$ symmetric
matrices and vectors in $\mathbb{R}^d$ subject to
\begin{equation}\label{matrineq}
Z\otimes Z\le M,
\end{equation}
in the sense of symmetric matrices.
This allows us to define the following object:
\begin{equation}\label{K00}
\mcM [E,B]=\sup_{M\ge Z\otimes Z} \;\;\frac{1}{2}\int_0^T\int_{\TTd}
2 E_iZ^i-(a_V\delta_{ij}+2B_{ij})M^{ij}
\in [-\infty,0],
\end{equation}
where the supremum is performed over all pairs $(Z,M)$ 
of continuous functions on
$[0,T]\times \TTd$, respectively valued
in $\mathbb{R}^d$ and in the set of 
symmetric matrices $M$.
Notice that definition (\ref{K00}) makes sense already as $(E,B)$  belong to $L^2\times L^\infty$. Thus $\mcM$ is a lower semi-continuous  function of $(E,B)$ valued in $[0,+\infty]$.

We can now minimize in $V$ and observe that using the previously defined $\mcM$ we have:
\begin{align}\label{ivpdual00}
\mcJ_{E}[\bV](V_0)=&\sup_{(\lambda,\gamma)\in\mathfrak{R}}
\Bigg(-\mcM[E-a_V \bV,B]+\int_0^T\iTTd \frac{1}{2}a_V \bV^i\bV^i+F^i\lambda_i-V^i_0 E_i\,dtdx
\Bigg), 
\end{align}
where, formally, we can write 
\begin{equation}\label{defM}
\mcM[E,B]=\int_0^T\iTTd E\cdot(a_V\mathbb{I}_d+2B)^{-1}\cdot E \,dtdx.
\end{equation}

This immediately implies $\mcJ_{E}[\bV](V_0)\ge 0$
(just by taking $\lambda=\gamma=0$, hence $E=B=0$). 
\\

Next, because of the lower bound $N(a_V,B)^{-1}=(a_V\Id+2B)^{-1}\ge (da_V)^{-1}$, we get an $L^2$ a priori bound for $E$. Indeed,
by definition (\eqref{defM}) of $\mcM$, we have:
\be\label{equicoercivityK}
\frac{1}{2da_V}\int_0^T\iTTd |E-a_V\bV|^2\le \mcM [E-a_V\bV,B].
\ee
So, for any $\varepsilon-$maximizer $(E,B)\in L^2\times L^\infty$ of (\ref{ivpdual00}), we get
\bea
0&\le  \mcJ_{E}[\bV](V_0)
\le \varepsilon-\mcM [E-a_V\bV,B]+\int_0^T\iTTd \frac{1}{2}a_V \bV^i\bV_i\,dtdx+\int_0^T\iTTd (F^i\lambda_i)(t,x)\,dtdx\non\\
&-\int_0^T\iTTd V^i_0E_i \,dtdx\le \varepsilon-\frac{1}{4da_V}\int_0^T\iTTd 2|E-a_V\bV|^2+4da_V V_0\cdot (E-a_V\bV)\non\\
&+\int_0^T\iTTd \frac{a_V}{2}|\bV|^2-a_V\bV\cdot V^0+2C(T)da_V F^2+\frac{\lambda^2}{C(T)8da_V}\,dtdx\non\\
&\le \varepsilon-\frac{1}{4da_V}\int_0^T\iTTd |E-a_V\bV|^2\,dtdx+\int_0^T\iTTd \frac{a_V}{2}|\bV|^2-a_V\bV\cdot V^0+2C(T)da_V F^2\,dtdx\non\\
&+\int_0^T\iTTd da_V|V^0|^2\,dtdx+\frac{1}{8da_V}\int_0^T\iTTd |E-a_V\bV|^2+a_V^2  |\bV|^2\,dtdx\label{est:mcJbVV0}
\eea (where $C(T)$ is the constant from \eqref{est:lambdaL2})

which provides the a priori bound :
\bea\label{bound-Echi}
\|E-a_V\bV\|_{L^2_T(L^2)}^2&\le 8da_V(\varepsilon+(d+\frac 12)a_V\|V_0\|_{L^2}^2)+(a_V^2+8da_V^2)\|\bV\|_{L^2_T(L^2)}\\
&+ 16 C(T)d^2a_V^2 \|F^2\|_{L^2_T(L^2)}^2
\eea hence

\bea
\|E\|_{L^2_T(L^2)}^2&\le 2\|E-a_V\bV\|_{L^2_T(L^2)}^2+2a_V^2\|\bV\|^2\\ &\le 16da_V(\varepsilon+(d+\frac 12)a_V\|V_0\|_{L^2}^2)+2(2a_V^2+8da_V^2)\|\bV\|_{L^2_T(L^2)}\\
&+ 32 C(T)d^2a_V^2 \|F^2\|_{L^2_T(L^2)}^2
\eea

We also deduce, out of \eqref{est:mcJbVV0}, by passing $\varepsilon\to 0$ and dropping the negative terms on the right hand side \\
$$
0\le \mcJ_{E}[\bV](V_0)\le (\frac{a_V}{8d}+\frac{a_V}{2})\|\bV\|_{L^2_T(L^2)}+da_V
\|V^0\|_{L^2}+\int_0^T\iTTd 2 C(T)da_V F^2-a_V\bV\cdot V^0\,dtdx
$$

By definition \eqref{K00}, $\mcM$ is lower semi-continuous with respect to the weak-* topology of 
$ L^2\times L^\infty$, while
$$
(\lambda,\gamma)\to \int_0^T\iTTd \frac{1}{2}a_V \bV^i\bV_i+F^i\lambda_i-V^i_0(E_i+a_V\bV_i)\,dtdx
$$ 
are continuous (since $V_0,F,\bV$ are given in $L^2$) 
Thus, we conclude that the maximization problem
(\ref{ivpdual00}) 
always has at least an optimal solution with $(E,B)\in L^2\times L^\infty$ (since its 
$\varepsilon-$maximizers stay confined in a fixed ball (and therefore a weak-* compact subset)  of $L^2\times L^\infty$, as $\varepsilon$ goes to zero). We note that $(E,B)\in L^2\times L^\infty \Leftrightarrow (\lambda,\gamma)\in \mcR$ hence the solution is well-defined in terms of our original variables $\lambda,\gamma$.

\end{proof}
\begin{remark} We will refer to a pair $(\gamma,\lambda)\in\mathfrak{R}$, solution of the sup-inf problem \eqref{def:infsuprigorous} obtained in the previous Theorem, as being a {\bf variational dual solution} of Euler. A similar construct for nonlinear elastostatics is obtained in \cite{sga}.
\end{remark} 

\begin{remark} We would like  to emphasize that the base state $\bar V$ needs only to be in $L^2_T(L^2(\TTd))$ and in particular it need not be divergence free.
\end{remark}

\begin{remark}
One issue that remains unclear is what is the relationship of the variational dual solutions with the weak solutions of Euler. We will see below that weak solutions define variational dual solutions, for a suitable base state. However it is not clear if the reciprocal also holds.

Indeed, if $(\lambda^*,\gamma^*)\in\mathfrak{R}$ provide a variational dual solution, then  formally, according to the scheme  in Section~\ref{sec:Eulerheuristic}, we have that $V^{i,\mcH}:=(N^{-1})^{ij}(a_V)^{-1}\left(-\partial_j\gamma-\partial_t \lambda_j+a_V \bV^j \right)$ is a weak solution. However out of $(\lambda^*,\gamma^*)\in\mathfrak{R}$ we only know that $\partial_j\gamma+\partial_t \lambda_j-a_V \bV_j$ is in $L^2_T(L^2)$ but we do not know if $N(a_V,B)^{-1}$ is bounded in $L^\infty$ (nor in any $L^p$ for that matter). Obtaining such an upper bound on $N(a_V,B)^{-1}$ seems to be necessary in order to understand in what space lie the solution of Euler obtained out of the variational dual solutions.  
\end{remark}

\begin{remark}
Following Brenier, we could have attempted to do the theorem in terms of the variables $E,B$ which are natural for this setting, in which case these would belong $\mathcal{EB}_{2,\infty}$ a subspace of $L^2\times L^\infty$ functions such that 

\begin{equation}\label{rel:testcompatibilityEB2infty}
    \partial_t B_{ij}=\frac{1}{2}(\partial_j E_i+\partial_i E_j)-\partial_i\partial_j\Delta^{-1}\partial^l E_l
\end{equation} holds in a weak sense, see \cite{Brenier-CMP} for details. However, this would have worked only if the forcing term $F$ were zero, since this generates a term in the dual formulation that cannot be expressed in terms of $E$ and $B$.
\end{remark}

 \begin{remark}
Overall, while the goal is to obtain a weak solution of the primal problem, noting that the pre-dual $\widehat{S}$ is necessarily affine (and hence concave) in the dual fields, the $\inf$ in the statement of the previous theorem renders the resulting functional concave in the dual variables. Thus, solving a $\sup_D \inf_U \widehat{S}[U,D;\bar{U}]$ problem is a `relaxed' version of seeking a critical point with a consistency guarantee that were a maximizer of $\inf_U \widehat{S}$ to exist and be within the region where the functional is differentiable, one would have obtained a weak solution of the primal problem, using the Dual-to-Primal map ($V^H(\mathcal{D})$/\eqref{eq:VHsimple}), through a critical point of the dual variational problem.

\end{remark}

Let us now prove the consistency of our construction.

\begin{theorem}\label{thm:consistency}

Let $\tilde V\in L^2_T(L^2(\TTd))$ be a weak solution of the Euler with initial data $V^0\in L^2(\TTd)$, mean-zero and with $\nabla\cdot V^0\equiv 0$. Then  the sup-inf problem \eqref{def:infsuprigorous} with base state $\bV=\tilde V$ has $(\lambda, \gamma)=(0,0)$  as a solution, hence it is a variational dual solution to which   $V^\mcH=\tilde V$ corresponds as a solution of Euler. 
\end{theorem}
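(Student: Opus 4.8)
The plan is to show that, when the base state is taken to be $\bV=\tilde V$, the supremum defining $\mcJ_{E}[\tilde V](V^0)$ in \eqref{def:infsuprigorous} equals $0$ and is attained at $(\lambda,\gamma)=(0,0)$. Since the existence theorem already gives $\mcJ_{E}[\bV](V_0)\ge 0$ (by testing with $\lambda=\gamma=0$), it suffices to produce the matching upper bound $\mcJ_{E}[\tilde V](V^0)\le 0$ and to confirm that the competitor $(0,0)$ realizes the value $0$. The argument is an exact-cancellation computation exploiting that the quadratic penalty is centered at the true solution.

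First I would evaluate the inner infimum at $(\lambda,\gamma)=(0,0)$. There $B=0$ and the inner functional in \eqref{def:infsuprigorous} reduces, after completing the square in $V$, to $\tfrac{1}{2}a_V\int_0^T\iTTd |V-\tilde V|^2\,dtdx$, whose infimum over $V$ is $0$, attained at $V=\tilde V$. Hence the value at $(0,0)$ is $0$, confirming $\mcJ_{E}[\tilde V](V^0)\ge 0$ with $(0,0)$ as the natural maximizing candidate.

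Next, for the upper bound, I would fix an arbitrary $(\lambda,\gamma)\in\mcR$ and test the inner functional with the single competitor $V=\tilde V=\bV$; since this gives a finite value, the inner infimum is bounded above by it irrespective of whether $a_V\Id+2B\ge 0$ holds. Because the base state coincides with the weak solution, the self-energy contributions $\tfrac12 a_V|\tilde V|^2-a_V|\tilde V|^2+\tfrac12 a_V|\tilde V|^2$ cancel identically, and what survives is precisely the standard weak-formulation functional of Euler evaluated on $\tilde V$, namely $\int_0^T\iTTd \tilde V^i\tilde V^j\partial_j\lambda_i+\tilde V^i(\partial_t\lambda_i+\partial_i\gamma)+F^i\lambda_i\,dtdx+\iTTd \lambda_i(0,x)V^i_0(x)\,dx$. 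As $\tilde V$ is a weak solution with datum $V^0$, this quantity vanishes for admissible test pairs, so $\inf_V(\cdots)\le 0$ for every $(\lambda,\gamma)\in\mcR$ and hence $\mcJ_{E}[\tilde V](V^0)\le 0$. Combined with the previous step this forces $\mcJ_{E}[\tilde V](V^0)=0$, attained at $(0,0)\in\mcR$, so $(0,0)$ is a variational dual solution; evaluating \eqref{eq:VHsimple} at $E=B=0$, where $N=a_V\Id$, yields $V^{i,\mcH}=(a_V)^{-1}\delta^{ij}(a_V\tilde V^j)=\tilde V^i$, the asserted Euler solution.

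The one genuinely technical point, and the main obstacle, is justifying that the weak formulation of Euler still vanishes when tested against the low-regularity pairs $(\lambda,\gamma)\in\mcR$ rather than against smooth divergence-free test functions. I would handle this by a mollification/density argument, approximating $\lambda$ by smooth, divergence-free, time-$T$-vanishing fields $\lambda^n$ with $\partial_t\lambda^n\to\partial_t\lambda$ in $L^2_T(L^2)$, with $\nabla^{sym}\lambda^n\rightharpoonup^\ast\nabla^{sym}\lambda$ in $L^\infty_T(L^\infty)$ under a uniform bound, and with $\nabla\gamma^n\to\nabla\gamma$ in $L^2_T(L^2)$. The only delicate convergence is $\int \tilde V^i\tilde V^j B^n_{ij}\to\int \tilde V^i\tilde V^j B_{ij}$, which follows since $\tilde V^i\tilde V^j\in L^1$ and $B^n\rightharpoonup^\ast B$ in $L^\infty=(L^1)^\ast$; the remaining terms converge via the stated strong $L^2$ approximations paired against $\tilde V\in L^2$. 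One must ensure the spatial mollification preserves both $\divop\lambda^n=0$ and $\lambda^n(T,\cdot)=0$, which is automatic if the regularization acts in the spatial variable only.
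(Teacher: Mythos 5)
Your proof is correct and follows essentially the same route as the paper: the upper bound comes from evaluating the inner infimum at $V=\tilde V$ for each fixed $(\lambda,\gamma)$ (the paper packages this as $\sup\inf\le\inf\sup\le\sup_{(\lambda,\gamma)}\widetilde{\mathcal{I}}_E[\tilde V,\tilde V]=0$, which is the same computation), and the lower bound from taking $(\lambda,\gamma)=(0,0)$ and completing the square. Your closing mollification argument addresses a point the paper leaves implicit --- that the weak formulation must vanish against the low-regularity pairs in $\mcR$ rather than only smooth divergence-free test functions --- and your handling of the quadratic term via weak-$\ast$ convergence of $B^n$ in $L^\infty$ paired with $\tilde V\otimes\tilde V\in L^1$ is the correct way to close that gap.
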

\begin{proof}

Let us denote

\bea  \widetilde{\mathcal{I}}_E [V,\tilde V](\lambda,\gamma):= &\int_0^T\iTTd \left(V^i\partial_t \lambda_i+\frac{1}{2}V^iV^j(\partial_j\lambda_i+\partial_i\lambda_j) \right)\,dtdx+\iTTd \lambda_i(x,0)V^i_0(x),dx\non\\
&+\int_0^T\iTTd
V^i(t,x)\partial_i\gamma(t,x)\,dtdx+\int_0^T\iTTd \frac{a_V}{2} |V(t,x)- \tilde V(t,x)|^2\,dtdx
\eea
with $(\lambda, \gamma)\in \mathfrak{R}$
and, furthermore define:

\be
\mathcal{I}_{E}[\tilde{V}](V_0):=\inf_V \sup_{(\lambda,\gamma)\in\mathfrak{R}}  \widetilde{\mathcal{I}}_E[V,\tilde{V}](\lambda,\gamma)
\ee

Similarly as in the work of Brenier \cite{Brenier-CMP} we note that since $\inf\sup\ge \sup\inf$ we have:

\be
\mathcal{I}_{E}[\tilde{V}](V_0)\ge \mcJ_{E}[\tilde{V}](V_0)
\ee .

On the other hand, out of the definition of $\mathcal{I}_{E}[\tilde{V}](V_0)$ we have:

\be
\mathcal{I}_{E}[\tilde{V}](V_0)\le \sup_{(\lambda,\gamma)\in\mathfrak{R}}\widetilde{\mathcal{I}}[\tilde V,\tilde{V}](\lambda,\gamma)=0
\ee where the last equality holds because of our assumption that $\tilde V$ is a weak solution of Euler.

Out of the last two inequalities we have that:

\be
0\ge \mcJ_{E}[\tilde{V}](V_0)
\ee

On the other hand, taking $(\lambda,\gamma)=(0,0)$ we have:

\be
\mcJ_{E}[\tilde{V}](V_0)\ge \inf_V \int_0^T\iTTd \frac{a_V}{2}|V-\tilde V|^2\,dtdx\ge 0
\ee

Thus the last two relations show that the value $\mcJ_{E}[\tilde{V}](V_0)=0$ is attained for $(\lambda,\gamma)=0$ and $V=\tilde V$. 

\end{proof}

\begin{remark}
It should be noted that we are able to show that any weak solution of Euler is obtained from a variational dual solution of Euler. This is in contrast to \cite{Brenier-CMP}, Thm. $2.3$, where the consistency of the maximization scheme is shown only for strong solutions of Euler, local in time, and for which certain quantitative assumptions are satisfied. 
\end{remark}

\section{Variational Dual Solutions for Navier-Stokes}

\label{sec:NS}

We proceed analogously to the case of Euler and aim to obtain solutions through an sup-inf problem, that now becomes:

\bea
\sup_{(\lambda,\gamma, \chi)}\inf_{V,W}&\int_0^T\iTTd\left(V^i\partial_i\gamma+V^i\partial_t \lambda_i+\frac{1}{2}V^iV^j(\partial_j\lambda_i+\partial_i\lambda_j)- W^{ij}\partial_j \lambda_i \right)\,dtdx\non\\
&+\int_0^T\iTTd\left( 2\nu V^i\partial_j \chi_{ij}+W^{ij}\chi_{ij}\right)\,dtdx+\non\\
&+ \int_0^T\iTTd F^i(t,x)\lambda_i (t,x)\,dtdx+\iTTd  \lambda_i(x,0)V^i_0(x),dx\non\\
&+\int_0^T\iTTd\fH(V, W;\bV,\bW))\,dtdx
\eea

Similarly as in the case of Euler, we observe that  in order to have  the infimum in $V$ exists,  we need to have pointwise  $a_V\Id+2B\ge 0$ ( where the matrix $B_{ij}:=\frac{1}{2}(\partial_i\lambda_j+\partial_j\lambda_i)$) which will lead, with the same arguments as in Section~\ref{sec:Euler} to the bound

\begin{equation}\label{lwbd:B}
-a_V\mathbb{I}_d \le 2B\le a_V(d-1)\mathbb{I}_d,
\end{equation}
which provides an  apriori $L^\infty$ bound for $B$ (since $B$ is valued in the set of symmetric
matrices). 

This time we will need the variable:
 $$
 \mbE_i:=\partial_t \lambda_i+\partial_i \gamma+2\nu \partial_l\chi_{il}
 $$ 

Similarly to the case of Euler, the variables $\mbE$ and $B$ will be the natural variables in which to obtain existence of the sup-inf problem. However, unlike in the case of Euler, we will no longer be able to obtain separate regularity of $\lambda,\gamma$ out of the regularity of $\mbE$ and $B$, because of the presence of the term $\partial_l\chi_{il}$ for which the sup-inf problem will only provide $H^{-1}$ regularity. Thus we have to adopt a functional framework similar to the one used by Brenier for Euler in \cite{Brenier-CMP}. Also, because of this issue, we will need to take $F\equiv 0$.

Note that we have the identities:

\begin{equation}
\partial_t B_{ij}=\frac{1}{2}(\partial_j \mbE_i+\partial_i \mbE_j)-\partial_i\partial_j\gamma-\nu (\partial_j\partial_l\chi_{li}+\partial_i\partial_l\chi_{lj})
\end{equation} and

\begin{equation}
\partial_i \mbE_i=\Delta\gamma+2\nu\partial_i\partial_l \chi_{li}
\end{equation} hence

\begin{equation}
\gamma:=\Delta^{-1}(\partial_i \mbE_i-2\nu\partial_i\partial_j\chi_{ij})
\end{equation} which provides a compatibility relationship between the new unknowns $\mbE,B,\chi$ namely:

\begin{equation}\label{rel:testcompatibility}
    \partial_t B_{ij}=\frac{1}{2}(\partial_j \mbE_i+\partial_i \mbE_j)-\partial_i\partial_j\Delta^{-1}(\partial_l \mbE_l-2\nu\partial_l\partial_k\chi_{lk})-\nu(\partial_j\partial_l\chi_{li}+\partial_i\partial_l\chi_{lj})
\end{equation}

Let $\mathcal{EB}\Psi_{2,\infty,2}^\nu$ be the class of all $L^2\times L^\infty\times L^2$ fields $(E,B,\chi)$ on
$[0,T]\times \TTd$ with $E$
 valued
in $\mathbb{R}^d$ and $B$ and $\chi$ taking values   into the set of 
symmetric $d\times d$ matrices satisfying weakly the constraint \eqref{rel:testcompatibility} with $B(T,\cdot)\equiv 0$. 

We then have the following analogue of Brenier's Theorem $2.2$ namely:

\begin{theorem} 
\label{existenceNS}
Let $V_0\in L^2(\Omega,\mathbb{R}^d)$ be 
a divergence-free vector field 
of zero spatial mean with $\nabla V_0\in L^2(\Omega,\mathbb{R}^{d\times d}) $. 
Then the sup-inf problem 
\bea
\mcJ_{NS}[\bV,\bW](V_0):=&\sup_{(E,B,\chi)\in \mcE\mcB\Psi_{2,\infty,2}^\nu}\inf_{V,W} \int_0^T\iTTd V^i\mbE_i+\frac{1}{2} V^iV^j (2B_{ij}+a_V\delta_{ij})\non\\
&+\int_0^T\iTTd\frac{a_V}{2} \bV^i(\bV^i-2V^i)\,dtdx\non\\
&+\int_0^T\iTTd \frac{a_W}{2}W^{ij}W^{ij}-{W^{ij}(B_{ij}-\chi_{ij})}\,dtdx\non\\
&+\int_0^T\iTTd\frac{a_W}{2}\bW^{ij}\bW^{ij}-{a_W \bW^{ij} W^{ij}}\,dtdx\non\\
&-\int_0^T\iTTd V^i_0(x)\mbE_i(t,x){+}2\nu\partial_l V^i_0 \chi_{il}\,dtdx
\label{infsupNS}
\eea  admits a solution $(E,B,\chi)\in \mcE\mcB\Psi_{2,\infty,2}^\nu$.
In addition $B$ belongs to $C^{1/2}([0,T],C^2(\TTd)'_{w*})$.

\end{theorem}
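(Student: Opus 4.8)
The plan is to reproduce the Brenier-type argument of Theorem~\ref{existenceEuler} in the enlarged variable set $(\mbE,B,\chi)$ that the loss of separate control over $\lambda,\gamma$ forces upon us, carrying along the linear compatibility constraint \eqref{rel:testcompatibility}. First I would carry out the inner infimum over the two primal fields pointwise. Minimising in $V$ reproduces the Euler computation, with minimiser $V^{i,\fH}=-(N^{-1})^{ij}(\mbE_j-a_V\bV_j)$ and $N(a_V,B)=a_V\Id+2B$; minimising the decoupled quadratic in $W$ gives $W^{ij,\fH}=\bW^{ij}+a_W^{-1}(B_{ij}-\chi_{ij})$, as in \eqref{defVW}. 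Substituting these back produces a functional of $(\mbE,B,\chi)$ whose $\mbE$-part is $-\mcM[\mbE,B]$ (which, after completing the square as in \eqref{ivpdual00}, becomes $-\mcM[\mbE-a_V\bV,B]$ plus affine terms) and whose $W$-part is the concave penalty $-\tfrac{1}{2a_W}\int_0^T\iTTd|B-\chi|^2$ together with affine base-state terms. Since $\divop\lambda=\tr B=0$ still holds, the semidefiniteness requirement $a_V\Id+2B\ge0$ gives, exactly as in Euler, the two-sided bound \eqref{lwbd:B} and hence an a priori $L^\infty$ control of $B$; I take $F\equiv0$ throughout, since the forcing cannot be re-expressed through $(\mbE,B,\chi)$.

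For coercivity I would exploit the two quadratic penalties. The eigenvalue bounds behind \eqref{lwbd:B} give $N(a_V,B)^{-1}\ge(da_V)^{-1}\Id$, so $\mcM[\mbE-a_V\bV,B]\ge\tfrac{1}{2da_V}\|\mbE-a_V\bV\|_{L^2_T(L^2)}^2$, furnishing an $L^2$ bound on $\mbE$ exactly as in \eqref{equicoercivityK}--\eqref{bound-Echi}. The $W$-penalty controls $\|B-\chi\|_{L^2_T(L^2)}$, and combined with the $L^\infty$ bound on $B$ this yields an $L^2$ bound on $\chi$. The remaining terms are affine: the base-state integrals are controlled since $\bV,\bW\in L^2_T(L^2)$, and the initial-data term $\int_0^T\iTTd V^i_0\mbE_i-\nu\partial_l V^i_0\chi_{il}$ is a continuous linear functional precisely because $\nabla V_0\in L^2$ pairs with $\chi\in L^2$. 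Absorbing these into the quadratic terms by Young's inequality, as in the chain \eqref{est:mcJbVV0}, confines every $\varepsilon$-maximiser to a fixed ball of $L^2\times L^\infty\times L^2$.

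For existence I would invoke weak-$*$ compactness. The functional $\mcM$ is weak-$*$ lower semicontinuous through its sup-representation \eqref{K00}; the convex penalty $\int|B-\chi|^2$ is weakly lower semicontinuous; and the affine terms are weak-$*$ continuous. Hence along a maximising sequence, extracted to converge weak-$*$ in $L^2\times L^\infty\times L^2$, the objective does not increase in the limit. The constraint \eqref{rel:testcompatibility} and the endpoint condition $B(T,\cdot)=0$ are linear in $(\mbE,B,\chi)$ and therefore stable under weak-$*$ limits, so the limit lies in $\mcE\mcB\Psi_{2,\infty,2}^\nu$ and attains the supremum.

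It remains to establish $B\in C^{1/2}([0,T],C^2(D)'_{w*})$, which is also what renders the trace $B(T,\cdot)=0$ meaningful. The idea is to read the time derivative of $B$ off the constraint \eqref{rel:testcompatibility}. Pairing that identity with a test field $\phi\in C^2(\TTd)$ and integrating by parts to transfer every spatial derivative onto $\phi$, one sees that $\partial_t B$ involves $\mbE$ against one derivative of $\phi$, $\chi$ against two derivatives of $\phi$, and the nonlocal terms $\partial_l\Delta^{-1}\partial_i\partial_j\phi$ and $\partial_l\partial_k\Delta^{-1}\partial_i\partial_j\phi$; the latter are Fourier multipliers of order $1$ and $0$ acting on $\phi$, hence map $C^2(\TTd)$ into $L^2(\TTd)$. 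Consequently $|\langle\partial_t B(s),\phi\rangle|\le C\|\phi\|_{C^2}(\|\mbE(s)\|_{L^2}+\|\chi(s)\|_{L^2})$ for a.e.\ $s$. Writing $\langle B(t_2)-B(t_1),\phi\rangle=\int_{t_1}^{t_2}\langle\partial_t B(s),\phi\rangle\,ds$ and applying Cauchy--Schwarz in time yields
\[
\sup_{\|\phi\|_{C^2}\le1}\big|\langle B(t_2)-B(t_1),\phi\rangle\big|\le C\,|t_2-t_1|^{1/2}\big(\|\mbE\|_{L^2_T(L^2)}+\|\chi\|_{L^2_T(L^2)}\big),
\]
which is exactly the claimed $1/2$-Hölder continuity. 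I expect the main obstacle to be the bookkeeping of the nonlocal $\Delta^{-1}$ contributions in this last step, together with the correct weak reading of \eqref{rel:testcompatibility}: one must verify that the composite multipliers genuinely carry the asserted orders, so that they stay $L^2$-bounded when tested against merely $C^2$ fields, and that this same weak form of the constraint is the one propagated to the weak-$*$ limit in the compactness argument.
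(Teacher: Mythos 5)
Your proposal follows essentially the same route as the paper's proof: pointwise minimisation in $(V,W)$ yielding $-\mcM[\mbE,B]$ plus the quadratic penalty in $\chi-B$, coercivity from $N(a_V,B)^{-1}\ge (da_V)^{-1}\Id$ together with the $L^\infty$ bound \eqref{lwbd:B}, existence via weak-$*$ compactness and lower semicontinuity through the sup-representation of $\mcM$ with the linear constraint \eqref{rel:testcompatibility} passing to the limit, and the $C^{1/2}$ regularity of $B$ obtained by testing $\partial_t B$ against $C^2$ fields and using Cauchy--Schwarz in time. The argument is correct and matches the paper's in all essential steps.
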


\begin{proof}
Minimization in $V$ and $W$ provides
\bea\label{ivpdual0}
\mcJ_{NS}[\bV,\bW](V_0)=&\sup_{(\mbE,B,\chi)\in\mathcal{EB}\Psi_{2,\infty,2}^\nu}
\;\;-\tilde{\mcM}[\mbE-a_V \bV,B,\chi]-\int_0^T\iTTd
V^i_0 \mbE_i\,+2\nu\partial_l V_i^0\chi_{il}\,dtdx
\eea
where 

\bea
\tilde{\mcM}[\mbE,B, \chi]\colon=\mcM[\mbE,B]+\int_0^T\iTTd \frac{1}{2a_W}(\chi_{ij}-B_{ij}-a_{W}\bW^{ij})( \chi_{ij}-B_{ij}-a_{W}\bW^{ij})\,dtdx
    \eea with
\begin{equation}\label{M00}
\mcM[\mbE,B]=\frac{1}{2}\int_0^T\iTTd \mbE\cdot N(a_V,B)^{-1}\cdot \mbE\,dtdx
\end{equation} and

 \be
 N(a_V,B):=a_V\mathbb{I}_d+2B
 \ee 

This immediately implies $\mcJ_{NS}[\bV,\bW](V_0)\ge 0$
(just by taking $\mbE=B=\chi=0$).
\\
We note that we  can write, point-wise in $(t,x)$,
$$
\mbE\cdot(\mathbb{I}_d+2B)^{-1}\cdot \mbE
=\sup_{M,Z} \;\;2\mbE_iZ^i-(\delta_{ij}+2B_{ij})M^{ij}
$$
where $M$ and $Z$ are respectively $d\times d$ symmetric
matrices and vectors in $\mathbb{R}^d$ subject to
\begin{equation}\label{matrineq}
Z\otimes Z\le M,
\end{equation}
in the sense of symmetric matrices. This allows us to give an alternative definition of $\mcM$, namely
\begin{equation}\label{M}
\mcM[\mbE,B]=\sup_{M\ge Z\otimes Z} \;\;\frac{1}{2}\int_0^T\iTTd
2 \mbE_iZ^i-(\delta_{ij}+2B_{ij})M^{ij}
\in [-\infty,0],
\end{equation}
where the supremum is performed over all pairs $(Z,M)$ 
of continuous functions on
$[0,T]\times \TTd$, respectively valued
in $\mathbb{R}^d$ and in the set of 
symmetric matrices $M$.
Notice that definition (\ref{M00}) makes sense as $(\mbE,B)$ belong to $L^2\times L^\infty$. Moreover, $\mcM$ is a lower semi-continuous  function of $(\mbE,B)$ valued in $[0,+\infty]$.
Next, because of the lower bound \eqref{lwbd:B}, we get an $L^2$ apriori bound for $\mbE-a_V\bV$. Indeed,
by definition (\ref{M}) of $\mcM$, we have:
\bea
\frac{1}{2a_Vd}\int_0^T\iTTd|\mbE-a_V\bV|^2&\le \mcM[\mbE-a_V\bV,B]\non\\
&=\mcM[\mbE,B]
-\frac{1}{2}\int_0^T\iTTd
a_V\bV\cdot N(a_V,B)^{-1}\cdot\mbE\,dtdx\non\\
&-\frac{1}{2}\int_0^T\iTTd a_V\mbE\cdot N(a_V,B)^{-1}\cdot \bV\,dtdx\non\\
&+\frac{1}{2}\int_0^T\iTTd a_V\bV\cdot N(a_V,B)^{-1} \cdot a_V\bV\,dtdx\non
\eea
So, for any $\varepsilon-$maximizer $(\mbE,B,\chi)\in \mathcal{EB}\Psi_{2,\infty,2}^\nu$ of (\ref{ivpdual0}), we get (where $\|\cdot\|$ denotes the $L^2_T(L^2)$ norm, for simplicity)

\bea
&0\le \mcJ_{NS}[\bV,\bW](V_0)
\le \varepsilon-\mcM[\mbE-a_V\bV,B]-\int_0^T\iTTd V_0\cdot \mbE+2 \nu \partial_j V^i_0\chi_{ji}\,dtdx,\non\\
&-\frac{1}{2 a_W}\|\chi-B-a_W\bW\|^2+\frac{a_V}{2}\|\bV\|^2+\frac{a_W}{2}\|\bW\|^2\non\\
&\le \varepsilon-\frac{1}{4da_V}\int_0^T\iTTd(2|\mbE-a_V\bV|^2+4da_V V_0\cdot \mbE)\non\\
&-\frac{1}{2a_W}\|\chi-B-a_W\bW\|_{L^2}^2+\frac{a_V}{2}\|\bV\|^2+\frac{a_W}{2}\|\bW\|^2+\int_0^T\iTTd 2\nu\partial_j V_0^i\chi_{ji}\,dtdx,\non
\eea
\bea
&= \varepsilon-\frac{1}{4da_V}\Bigg(2\|\mbE\|^2+2\|\frac{\mbE}{2}-2a_V\bV\|^2-\frac{\|\mbE\|^2}{2}-6a_V^2\|\bV\|^2+\|4da_VV_0+\frac{\mbE}{2}\|^2-\frac{\|\mbE\|^2}{4}-16d^2a_V^2\|V_0\|^2\Bigg)\non\\
&-\frac{1}{2a_W}\Big(\|\chi\|^2+\|2(B+a_W\bW)-\frac{\chi}{2}\|^2-3\|B+a_W\bW\|^2-\frac{\|\chi\|^2}{4}\Big)+\frac{a_V}{2}\|\bV\|^2+\frac{a_W}{2}\|\bW\|^2\non\\
&-\|2\nu\sqrt{2a_W}\nabla V_0+\frac{\chi}{2\sqrt{2a_W}}\|^2+8a_W\nu^2\|\nabla V^0\|^2+\frac{1}{2}\frac{\|\chi\|^2}{4 a_W}\non\\
&\le \varepsilon-\frac{1}{4da_V}\Bigg(\frac{5}{4}\|\mbE\|^2-6a_V^2\|\bV\|^2-16d^2a_V^2\|V_0\|^2\Bigg)-\frac{1}{2 a_W}\Big(\frac{\|\chi\|^2}{2}-3\|B+a_W\bW\|^2\Big)\non\\
&+8a_W\nu^2\|\nabla V^0\|^2+\frac{a_V}{2}\|\bV\|^2+\frac{a_W}{2}\|\bW\|^2\label{estJNSVW}
\eea

which provides the a priori bound, for every $\varepsilon-$maximizer $(\mbE,B,\chi)\in \mathcal{EB}\Psi_{2,\infty,2}^\nu$ of (\ref{ivpdual0}), namely:
\bea\label{bound-Echi}
\frac{5\|\mbE\|^2}{16da_V}+\frac{\|\chi\|^2}{4 a_W}\le &\eps+\frac{6a_V\|\bV\|^2}{4d}+4da_V\|V_0\|^2+\frac{3}{2 a_W}\|B+a_W\bW\|^2+{ 8  a_W}\nu^2\|\nabla V^0\|^2
\non\\
&+\frac{a_V}{2}\|\bV\|^2+\frac{a_W}{2}\|\bW\|^2\non\\
\le & \eps+\frac{a_V(3+d)}{2d}\|\bV\|^2+\frac{7a_W}{2}\|\bW\|^2+4da_V\|V_0\|^2+{ 8  a_W}\nu^2\|\nabla V^0\|^2+\frac{3}{2 a_W}\|B\|^2\non\\
\le  &\eps+\frac{a_V(3+d)}{2d}\|\bV\|^2+\frac{7a_W}{2}\|\bW\|^2+4da_V\|V_0\|^2+{ 8  a_W}\nu^2\|\nabla V^0\|^2\non\\
&+\frac{3a_V^2}{4 a_W}(d-1)^2|\mathbb{T}^d| T
\eea where for the last inequality we used the estimate \eqref{lwbd:B}.

Out of relation \eqref{estJNSVW} we get, by dropping the negative terms on the right hand side of the last inequality and letting $\eps\to 0$:

\bea
\mcJ_{NS}[\bV,\bW](V_0)\le &\frac{1}{4da_V}\Bigg(6a_V^2\|\bV\|^2+16d^2a_V^2\|V_0\|^2\Bigg)+\frac{3}{2}\|B+a_W\bW\|^2\non\\
&+{ 8  a_W}\nu^2\|\nabla V^0\|^2+\frac{a_V}{2}\|\bV\|^2+\frac{a_W}{2}\|\bW\|^2\non\\
\le  &\frac{a_V(3+d)}{2d}\|\bV\|^2+\frac{7a_W}{2}\|\bW\|^2+4da_V\|V_0\|^2+{ 8  a_W}\nu^2\|\nabla V^0\|^2\non\\
&+\frac{3a_V^2}{4 a_W}(d-1)^2|\mathbb{T}^d| T
\eea
We notice that  the upper bound is expressed in terms of the base states and the initial datum. 

By definition (\ref{M}), $(\mbE+\bV,B)\to \mcM[\mbE+\bV,B]$ is lower semi-continuous with respect to the weak-* topology of 
$L^\infty \times L^2\times L^2$ and the quadratic term in $\chi,B$ in $\tilde\mcM$ is also lower semi-continuous with respect to the weak topology in $L^2$. Also
$$
\mbE\rightarrow \int_0^T\int_\Omega V_0\cdot \mbE,
$$ and 

$$
\chi\to\int_0^T\int_\Omega 2\nu\partial_j V^i_0\chi_{ij}\,dx
$$
are continuous (since $V_0,\nabla V_0$ is given in $L^2$) and $\mathcal{EB}\Psi_{2,\infty,2}^\nu$ is weak-*
closed in $L^2\times L^\infty \times L^2$.
Thus, we conclude that the maximization problem
(\ref{ivpdual0}) 
always has at least one optimal solution $(\mbE,B,\chi)$ in class $\mathcal{EB}\Psi_{2,\infty,2}^\nu$, since its 
$\varepsilon-$maximizers stay confined in a fixed ball (and therefore a 
weak-* compact subset) 
of $L^2\times L^\infty \times L^2$, as $\varepsilon$ goes to zero. 

As for the regularity of $B$, we can argue similarly as in \cite{Brenier-CMP}. Indeed, we can write the righ-hand side of equation \eqref{rel:testcompatibility} as a sum of a first-order operator in $\mbE$ and a remainder:
\begin{equation}\label{rel:compatibility}
    \partial_t B_{ij}=(L\mbE)_{ij}+2\nu\partial_i\partial_j\Delta^{-1}(\partial_l\partial_k\chi_{lk})-\nu(\partial_j\partial_l\chi_{li}+\partial_i\partial_l\chi_{lj})
\end{equation}
where
\bea\label{operatorL}
(L\mbE)_{ij}=\frac{1}{2}(\partial_j \mbE_i+\partial_i \mbE_j)-\partial_i\partial_j\Delta^{-1}(\partial_l \mbE_l)
\eea

Then, for any smooth function $\psi$ on $\TTd$ valued on symmetric $d\times d$ matrices, we have
\bea
\iTTd (B_{ij}(t_1, \cdot)-B_{ij}(t_0, \cdot))\psi^{ij}\le \sqrt{t_1-t_0}\Big[\|\mbE\|_{L^2([0,T]\times \TTd)}\|L^*\psi\|_{L^{\infty}(\TTd)}+\|D^2\psi\|_{L^{\infty}(\TTd)}\|2\nu \chi\|_{L^2}\Big]
\eea where the operator $L^*$ is defined as 
$(L^*)^k_{ij}\psi^{ij}
=\partial_j\psi^{kj}-\partial^k(-\bigtriangleup)^{-1}\partial_i\partial_j\psi^{ij}.$

Finally, using the estimate \eqref{bound-Echi}, we can bound from above with terms depending on the base states and the initial datum.
\end{proof}

\begin{remark}
Let us note that with the current argument we are unable to specifiy the functional space in which the $V^\mcH,W^\mcW$ are. If one would be able to provide an $L^\infty$ bound of $N(a_V,B)^{-1}$ then we would have that the $V^\mcH,W^\mcW$ are in $L^2_T(L^2)$.
\end{remark}

Similarly as for Euler, we can also prove the consistency of our construction for Navier-Stokes, namely:

\begin{theorem}

Let $\tilde V\in L^2_T(H^1(\TTd))$ be a strong solution of the Navier-Stokes with initial data $V^0\in H^1(\TTd)$, mean-zero and with $\nabla\cdot V^0\equiv 0$.Then $\tilde W^{ij}:=\nu (\partial_j \tilde V^i+\partial_i \tilde V^j)$ belongs to $ L^2_T(L^2(\TTd)) $ and  the sup-inf problem \eqref{infsupNS}
with base state $(\bV,\bW)=(\tilde V,\tilde W)$ has $(\lambda,\gamma,\chi)=(0,0,0)$ as a solution, hence it is a variational dual solution to which   $(V^\fH,W^\fH)=(\tilde V,\tilde W)$ corresponds as a solution of Navier-Stokes.

\end{theorem}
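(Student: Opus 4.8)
The plan is to follow the Euler consistency argument almost verbatim, with the pair $(V,W)$ playing the role of the primal field and $(\lambda,\gamma,\chi)$ (equivalently $(\mbE,B,\chi)$) the dual field. First I would check admissibility of the base state: since $\tilde V\in L^2_T(H^1(\TTd))$ we have $\nabla\tilde V\in L^2_T(L^2)$, so $\tilde W^{ij}=\frac\nu2(\partial_j\tilde V^i+\partial_i\tilde V^j)\in L^2_T(L^2)$ and Theorem~\ref{existenceNS} applies with $(\bV,\bW)=(\tilde V,\tilde W)$. Denote by $\widetilde{\mathcal I}_{NS}[V,W;\tilde V,\tilde W]$ the integrand functional whose sup-inf defines $\mcJ_{NS}$ in \eqref{infsupNS}, regarded as a function of the primal pair $(V,W)$ and the dual data, and set $\mathcal I_{NS}[\tilde V,\tilde W](V_0):=\inf_{V,W}\sup_{(\mbE,B,\chi)}\widetilde{\mathcal I}_{NS}$. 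Since $\inf\sup\ge\sup\inf$ always holds, $\mathcal I_{NS}\ge\mcJ_{NS}[\tilde V,\tilde W](V_0)$, exactly as in the Euler case.

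The heart of the matter is the bound $\mathcal I_{NS}\le 0$, which I would obtain by freezing $(V,W)=(\tilde V,\tilde W)$ in the inner infimum and proving $\sup_{(\mbE,B,\chi)}\widetilde{\mathcal I}_{NS}[\tilde V,\tilde W;\tilde V,\tilde W]=0$. At $(V,W)=(\bV,\bW)$ the quadratic auxiliary term $\fH$ vanishes, and an integration by parts in space and time (using $\lambda(T,\cdot)=0$ and $\tilde V(0,\cdot)=V_0$) reorganizes the remaining terms into the weak form of \eqref{eq:NS} tested against $(\lambda,\gamma,\chi)$. Concretely, the $\chi$--terms collapse to $\int_0^T\iTTd(\tilde W^{ij}-\frac\nu2(\partial_j\tilde V^i+\partial_i\tilde V^j))\chi_{ij}\,dtdx=0$ by the constitutive law and the symmetry of $\chi$; the $\gamma$--term equals $-\int_0^T\iTTd\gamma\,\partial_i\tilde V^i\,dtdx=0$ by incompressibility of $\tilde V$; and the $\lambda$--terms combine into $-\int_0^T\iTTd(\partial_t\tilde V^i+\partial_j(\tilde V^i\tilde V^j)-\partial_j\tilde W^{ij})\lambda_i\,dtdx=\int_0^T\iTTd\partial_i p\,\lambda_i\,dtdx=-\int_0^T\iTTd p\,\partial_i\lambda_i\,dtdx$. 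This last integral vanishes because the dual fields are divergence-free: taking the trace of the compatibility constraint \eqref{rel:testcompatibility} gives $\partial_t(\operatorname{tr}B)=0$, and since $\operatorname{tr}B=\partial_i\lambda_i$ and $B(T,\cdot)=0$, one obtains $\partial_i\lambda_i\equiv0$. Hence $\widetilde{\mathcal I}_{NS}[\tilde V,\tilde W;\tilde V,\tilde W]\equiv0$ and therefore $\mcJ_{NS}\le\mathcal I_{NS}\le0$.

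For the matching lower bound I would simply evaluate \eqref{infsupNS} at the admissible choice $(\mbE,B,\chi)=(0,0,0)$, i.e. $(\lambda,\gamma,\chi)=(0,0,0)$. All weak-form contributions drop out and the functional reduces to $\inf_{V,W}\int_0^T\iTTd\frac{a_V}{2}|V-\tilde V|^2+\frac{a_W}{2}|W-\tilde W|^2\,dtdx=0$, attained at $(V,W)=(\tilde V,\tilde W)$, so $\mcJ_{NS}\ge0$. Combining the two bounds yields $\mcJ_{NS}[\tilde V,\tilde W](V_0)=0$, with the value attained at $(\lambda,\gamma,\chi)=(0,0,0)$; evaluating \eqref{defVW} at $\mbE=B=\chi=0$ gives $N=a_V\Id$, hence $V^\fH=\tilde V$ and $W^\fH=\tilde W$, as claimed.

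The step I expect to be the main obstacle is the upper bound within the relaxed class $\mcE\mcB\Psi^\nu_{2,\infty,2}$: the cancellations above were written in terms of an underlying $(\lambda,\gamma)$, whereas in that class only $(\mbE,B,\chi)$ are controlled and $\lambda,\gamma$ need not be individually recoverable with usable regularity. I would resolve this either by reproducing the same cancellations directly at the level of $(\mbE,B,\chi)$, using \eqref{rel:testcompatibility} to convert the pairings of $B$ and of $V^i_0$ against $\mbE$ into the requisite time-derivative structure, or by first establishing $\widetilde{\mathcal I}_{NS}[\tilde V,\tilde W;\tilde V,\tilde W]=0$ for smooth dual fields and then passing to the closure via the weak-* lower semicontinuity already exploited in Theorem~\ref{existenceNS}. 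A subsidiary point to verify is that the regularity $\tilde V\in L^2_T(H^1)$ of the strong solution, together with that of the associated pressure, genuinely justifies the integrations by parts and the pointwise use of the momentum balance.
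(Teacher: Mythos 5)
Your proposal is correct and follows essentially the same route as the paper: the inequality $\inf\sup\ge\sup\inf$ gives the upper bound $\mcJ_{NS}\le\mathcal{I}_{NS}\le\sup_{(\mbE,B,\chi)}\widetilde{\mathcal{I}}_{NS}[\tilde V,\tilde W;\tilde V,\tilde W]=0$, the choice $(\mbE,B,\chi)=(0,0,0)$ gives $\mcJ_{NS}\ge 0$, and the value $0$ is attained at $(\tilde V,\tilde W)$ and the zero dual state. The only difference is that you spell out why the weak form vanishes at $(V,W)=(\tilde V,\tilde W)$ (including recovering $\partial_i\lambda_i=0$ from the trace of the compatibility constraint to kill the pressure term), whereas the paper simply asserts this from the assumption that $\tilde V$ is a solution; your added care on this point, and on working directly in the relaxed class $\mcE\mcB\Psi^\nu_{2,\infty,2}$, is a legitimate refinement rather than a departure.
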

\begin{proof}

Let us denote

\bea  \widetilde{\mathcal{I}}_{NS}[V,W;\tilde V,\tilde W](\mbE,B,\chi):= &\int_0^T\iTTd V^i\mbE_i+\frac{1}{2} V^iV^j (2B_{ij}+a_V\delta_{ij})\non\\
&+\int_0^T\iTTd \frac{a_V}{2} \bV^i(\bV^i-2V^i)\,dtdx\non\\
&+\int_0^T\iTTd \frac{a_W}{2}W^{ij}W^{ij}+W^{ij}(\chi_{ij}- B_{ij})\,dtdx\non\\
&+\int_0^T\iTTd\frac{a_W}{2}\bW^{ij}(\bW^{ij}-2W^{ij})\,dtdx\non\\
&-\int_0^T\iTTd (V^i_0(x)\mbE_i(t,x)+2\nu\partial_l V^i_0 \chi_{il})\,dtdx
\eea
with $(\mbE, B,\chi )\in \mathcal{EB}\Psi_{2,\infty,2}^\nu$
and, furthermore define:

\be
\mathcal{I}_{NS}[\tilde{V},\tilde W](V_0):=\inf_{V,W} \sup_{(\mbE, B,\chi )\in \mathcal{EB}\Psi_{2,\infty,2}^\nu}  \widetilde{\mathcal{I}}_{NS}[V,W;\tilde V,\tilde W](\mbE,B,\chi)
\ee

Similarly as in the work of Brenier \cite{Brenier-CMP} we note that since $\inf\sup\ge \sup\inf$ we have:

\be
\mathcal{I}_{NS}[\tilde{V},\tilde W](V_0)\ge \mcJ_{NS}[\tilde{V},\tilde W](V_0)
\ee .

On the other hand, out of the definition of $\mathcal{I}_{NS}[\tilde{V},\tilde{W}](V_0)$ we have:

\be
\mathcal{I}_{NS}[\tilde{V},\tilde W](V_0)\le \sup_{(\mbE, B,\chi )\in \mathcal{EB}\Psi_{2,\infty,2}^\nu}  \widetilde{\mathcal{I}}_{NS}[\tilde V,\tilde W;\tilde V,\tilde W](\mbE,B,\chi)=0
\ee where the last equality holds because of our assumption that $\tilde V$ is a weak solution of Navier-Stokes and $\tilde W^{ij}=\nu (\partial_j \tilde V^i+\partial_i \tilde V^j)$.

Out of the last two inequalities we have that:

\be
0\ge \mcJ_{NS}[\tilde{V},\tilde W](V_0)
\ee

On the other hand, taking $(\mbE,B,\chi)=(0,0,0)$ we have:

\be
\mcJ_{NS}[\tilde{V},\tilde W](V_0)\ge \inf_V \int_0^T\iTTd \frac{a_V}{2}|V-\tilde V|^2+\frac{a_W}{2}|W-\tilde W|^2\,dtdx\ge 0
\ee

Thus the last two relations show that the value $\mcJ_{NS}[\tilde{V},\tilde W](V_0)=0$ is attained for $(\mbE,B,\chi)=(0,0,0)$ and $V^i=\tilde V^i$, $W^{ij}=\tilde W^{ij}=\nu (\partial_j \tilde V^i+\partial_i \tilde V^ j)$. 

\end{proof}

\section{The inviscid limit of Navier-Stokes to Euler as a $\Gamma$-convergence  }

In this section we will show that using the framework of variational dual solutions as defined before we can obtain the convergence of solutions for Navier-Stokes to solutions of Euler, when $\nu\to 0$.

To this end we will show the $\Gamma$ convergence of a sequence of functionals intimately involved in studying the mentioned limit. 
Thus let us consider the functionals  (for $\nu\ge 0$) :

\bea\label{def:LNS}
\mathcal{A}^\nu_{NS}(\mbE,B,\chi):=&\mcM[\mbE-(a_V+\nu^\alpha)\bV,N(a_V+\nu^\alpha)]+\int_0^T\iTTd
V^i_0 \mbE_i\,+2\nu\partial_l V^i_0\chi_{il}\,dtdx\non\\
&+\int_0^T\iTTd \frac{1}{2}(\chi_{ij}-B_{ij}-a_W\bW^{ij})(\chi_{ij}- B_{ij}-a_W\bW^{ij})\,dtdx\non\\
&-\int_0^T\int_{\TTd} \frac{a_V+\nu^\alpha}{2}\bV^i\bV^i+\frac{a_W}{2}\bW^{ij}\bW^{ij}\,dtdx
\eea  where  we denoted

\begin{equation}\label{def:mcM}
\mcM[\mbE,N]:=\frac{1}{2}\int_0^T\iTTd \mbE_i (N^{-1})^{ij}\mbE_j\,dtdx
\end{equation} with 

$$
\mbE_i, B_{ij}, N_{ij}(a):=a \delta_{ij}+2B_{ij};\, i,j=1,\dots,d
$$ as previously defined in the Section~\ref{sec:NS} on Navier-Stokes. 

We also take the exponent $\alpha$ to be such that 

\be\label{ass:alpha}
\alpha<\frac{1}{[d/2]+4}.
\ee

For any $\nu\ge 0$   we denote as before $\mathcal{EB}\Psi_{2,\infty,2}^\nu$ be the class of all $L^2\times L^\infty\times L^2$ fields $(\mbE,B,\chi)$ on
$[0,T]\times \TTd$ with $\mbE$
 valued
in $\mathbb{R}^d$ and $B$ and $\chi$ taking values   into the set of 
symmetric $d\times d$ matrices satisfying weakly the constraint \eqref{rel:testcompatibility}  with $B(T,\cdot)\equiv 0$.  

We consider the metric space $$\widetilde{\mathcal{EB}\Psi_{2,\infty,2}}:=\cup_{\nu\ge 0}\mathcal{EB}\Psi_{2,\infty,2}^\nu$$ endowed with the metric induced by the ambient space $L^2\times L^\infty\times L^2$ namely $$d((\mbE_1,B_1,\chi_1),(\mbE_2,B_2,\chi_2))=\|\mbE_1-\mbE_2\|_{L^2_T(L^2)}+\|B_1-B_2\|_{L^\infty_T(L^\infty))}+\|\chi_1-\chi_2\|_{L^2_T(L^2)}.$$

For $\nu\ge 0$ we define:

\be
\widetilde{\mathcal{A}^\nu_{NS}}(\mbE,B,\chi)=\left\{\begin{array}{ll}
\mathcal{A}^\nu_{NS}(\mbE,B,\chi) &\textrm{ for }(\mbE,B,\chi)\in \mathcal{EB}\Psi_{2,\infty,2}^\nu\\
+\infty &\textrm{ for }(\mbE,B,\chi)\in \widetilde{\mathcal{EB}\Psi_{2,\infty,2}}\setminus \mathcal{EB}\Psi_{2,\infty,2}^\nu
\end{array}\right.
\ee

Let us note that for $\nu=0$ we can naturally identify  $\mathcal{EB}\Psi_{2,\infty,2}^0 $ with $\mathcal{EB}_{2,\infty}\times L^2_T(L^2)$ as $\chi$ is no longer constrained through relation \eqref{rel:testcompatibility} if $\nu=0$.

Moreover, if we denote:

\bea
\mathcal{A}_{E}(E,B):=& \mcM [\mbE-a_V\bV,N(a_V,B)]+\int_0^T\iTTd
V^i_0 \mbE_i\,dtdx\non\\
&-\int_0^T\iTTd \frac{a_V}{2}\bV^i\bV^i+\frac{a_W}{2}
\bW^{ij}\bW^{ij}\,dtdx
\eea we see that the minimizers of this provide precisely the variational dual solutions of the Euler problem with initial data $V_0$ (as the additive constant $-\int_0^T \frac{a_W}{2}\bW^{ij}\bW^{ij}$ does not affect the minisation). 
Furthermore, we have that for any $(\mbE,B,\chi)\in \mathcal{EB}\Psi_{2,\infty,2}^0$ we have:

\be
\mathcal{A}^0_{NS}(\mbE,B,\chi) \ge \mathcal{A}_{E}(\mbE,B)
\ee so the minimizers of $\mathcal{A}^0_{NS}$ are provided by minimizers $(\mbE^*,B^*)$ of $\mathcal{A}_{E}$ and $\chi^*:=B^*+a_W\bW$.

We then have the following:

\begin{proposition}\label{prop:gammaconvstrong}
Let $V_0\in L^2(\Omega,\mathbb{R}^d)$ be 
a divergence-free vector field 
of zero spatial mean with $\nabla V_0\in L^2(\Omega,\mathbb{R}^{d\times d}) $. Then 

\be
\widetilde{\mathcal{A}^\nu_{NS}}\stackrel{\Gamma}{\longrightarrow} \widetilde{\mathcal{A}^0_{NS}}
\ee  in $\widetilde{\mathcal{EB}\Psi_{2,\infty,2}}$.
\end{proposition}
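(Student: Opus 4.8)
The plan is to verify the two defining conditions of De Giorgi $\Gamma$-convergence in the metric space $(\widetilde{\mathcal{EB}\Psi_{2,\infty,2}}, d)$ along an arbitrary sequence $\nu_n \downarrow 0$: the liminf inequality and the existence of a recovery sequence. Throughout I use that convergence in $d$ is \emph{strong} convergence in $L^2 \times L^\infty \times L^2$, and that by \eqref{lwbd:B} the field $B$ ranges in a fixed $L^\infty$-ball.

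\textbf{Liminf inequality.} Let $(\mbE_n, B_n, \chi_n) \to (\mbE, B, \chi)$ in $d$. I may assume each $(\mbE_n, B_n, \chi_n) \in \mathcal{EB}\Psi_{2,\infty,2}^{\nu_n}$, since otherwise the value is $+\infty$ at that index and does not lower the liminf. Passing to the distributional limit in the constraint \eqref{rel:testcompatibility} written for $\nu_n$, the two terms carrying the prefactor $\nu_n$ tend to zero ($\chi_n$ is $L^2$-bounded and $\nu_n \to 0$), so the limit $(\mbE, B)$ satisfies the Euler compatibility $\partial_t B = L\mbE$ with $L$ as in \eqref{operatorL}, and $B(T,\cdot)=0$ survives; hence $(\mbE, B, \chi) \in \mathcal{EB}\Psi_{2,\infty,2}^{0}$. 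Every term of $\mathcal{A}^{\nu_n}_{NS}$ other than $\mcM$ is continuous under strong convergence: the linear term $\int V_0^i \mbE_i$, the quadratic $\frac12\int(\chi - B - a_W\bW)^2$, and $\int \tfrac{a_V+\nu_n^\alpha}{2}|\bV|^2$ pass to the limit directly, while the coupling $\nu_n \int \partial_l V_0^i (\chi_n)_{il}$ vanishes because $\nabla V_0 \in L^2$ and $\chi_n$ is bounded. For the $\mcM$-term I use its dual representation \eqref{M}, adapted to the coefficient $a_V + \nu_n^\alpha$, as a supremum over continuous pairs $M \ge Z \otimes Z$ of affine functionals: for each fixed $(Z,M)$ the corresponding affine functional, evaluated at $(\mbE_n - (a_V+\nu_n^\alpha)\bV, B_n)$, converges to its value at $(\mbE - a_V\bV, B)$, whence $\liminf_n \mcM[\mbE_n - (a_V+\nu_n^\alpha)\bV, N(a_V+\nu_n^\alpha, B_n)] \ge \mcM[\mbE - a_V\bV, N(a_V,B)]$ after taking the supremum. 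Combining gives the liminf inequality.

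\textbf{Recovery sequence.} Fix a target $(\mbE, B, \chi) \in \mathcal{EB}\Psi_{2,\infty,2}^{0}$; if its energy is $+\infty$ the inequality is trivial, so assume it finite, whence $\mcM[\mbE - a_V\bV, N(a_V,B)] < \infty$. The key observation is that the $\nu$-constraint differs from its $\nu=0$ form only by terms that cancel after shifting $\mbE$: setting $\mbE^\nu_i := \mbE_i + \nu\,\partial_l \chi^\nu_{il}$, $B^\nu := B$, with $\chi^\nu$ \emph{any} symmetric field, a direct substitution into \eqref{rel:testcompatibility} shows $\partial_l \mbE^\nu_l - \nu\partial_l\partial_k\chi^\nu_{lk} = \partial_l \mbE_l$ while the two $\nu$-order terms cancel by symmetry of $\chi^\nu$, so the right-hand side collapses to $(L\mbE)_{ij} = \partial_t B_{ij}$; hence $(\mbE^\nu, B^\nu, \chi^\nu) \in \mathcal{EB}\Psi_{2,\infty,2}^{\nu}$ automatically, with $B^\nu(T)=0$. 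To force $\mbE^\nu \in L^2$ and $\mbE^\nu \to \mbE$, I take $\chi^\nu$ a spatial mollification of $\chi$ at scale $\epsilon = \epsilon(\nu) \to 0$ (which preserves symmetry and gives $\chi^\nu \to \chi$ in $L^2$), so that $\|\nu\,\partial_l\chi^\nu_{il}\|_{L^2} \lesssim \nu\,\epsilon^{-1}\|\chi\|_{L^2}$ with $\epsilon$ decaying slowly enough relative to $\nu$; then $(\mbE^\nu, B^\nu, \chi^\nu) \to (\mbE, B, \chi)$ in $d$. For the energy, all non-$\mcM$ terms converge to their $\nu=0$ values as above; for the $\mcM$-term I write $N_\nu := N(a_V,B) + \nu^\alpha \mathbb{I} \ge \nu^\alpha \mathbb{I}$ and $\mbE^\nu - (a_V+\nu^\alpha)\bV = (\mbE - a_V\bV) + g^\nu$ with $g^\nu \to 0$ in $L^2$. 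Since $N_\nu \downarrow N(a_V,B)$, monotone convergence gives $\frac12\int (\mbE-a_V\bV) N_\nu^{-1}(\mbE-a_V\bV) \nearrow \mcM[\mbE-a_V\bV, N(a_V,B)]$, while the cross and quadratic contributions of $g^\nu$ are controlled, via $N_\nu^{-1} \le \nu^{-\alpha}\mathbb{I}$ and Cauchy--Schwarz against the convergent main term, by $\nu^{-\alpha}\|g^\nu\|_{L^2}^2 \to 0$.

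\textbf{Main obstacle.} The delicate point is entirely in the recovery direction, where one must simultaneously enforce the $\nu$-dependent compatibility \eqref{rel:testcompatibility}, keep the regularization error $\nu\,\mathrm{div}\,\chi^\nu$ negligible in $L^2$, and prevent the unbounded factor $N_\nu^{-1} \le \nu^{-\alpha}\mathbb{I}$ — forced by the possible degeneracy of $N(a_V,B)$ at $\nu=0$ — from amplifying that error in $\mcM$. Reconciling these requires calibrating the mollification rate $\epsilon(\nu)$ against the regularization $\nu^\alpha$ and against the dimension-dependent Sobolev control of the derivatives of $\chi^\nu$ appearing in \eqref{rel:testcompatibility}; this is exactly what the standing assumption \eqref{ass:alpha} secures. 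The liminf inequality, by contrast, is essentially soft, resting on strong convergence, distributional stability of the constraint, and the convexity and lower semicontinuity built into the representation \eqref{M}.
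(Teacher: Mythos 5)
Your proof is correct, and your liminf argument follows essentially the paper's route: lower semicontinuity of the quadratic terms together with the representation of $\mcM$ as a supremum of affine functionals of $(\mbE,B)$ (you are in fact more explicit than the paper about the stability of the constraint class under the limit $\nu_n\to 0$, which the paper leaves implicit). The recovery sequence, however, is genuinely different. The paper keeps $\mbE^\nu:=\bar{\mbE}$ fixed and absorbs the $\nu$-terms of \eqref{rel:testcompatibility} into a perturbation of $B$, namely $B^\nu:=\bar B-\nu\int_t^T(\cdots)\,ds$ as in \eqref{def:Bnu}; this requires an $L^\infty$ bound on the \emph{second} derivatives of the mollified $\chi^\nu$ (hence the $H^{[d/2]+3}$ norm in \eqref{rel:Highderivchi}) and the comparison $(a_V+\nu^\alpha)\Id+2B^\nu\ge a_V\Id+2\bar B$, which is exactly where the restriction \eqref{ass:alpha} on $\alpha$ enters; the $\mcM$-term is then passed to the limit by diagonalization and dominated convergence. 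You instead keep $B^\nu:=B$ fixed and shift $\mbE^\nu_i:=\mbE_i+\nu\,\partial_l\chi^\nu_{il}$, exploiting the exact cancellation (by symmetry of $\chi^\nu$) of all $\nu$-dependent terms in \eqref{rel:testcompatibility}; then $N(a_V+\nu^\alpha,B^\nu)=N(a_V,B)+\nu^\alpha\Id$ is a pure scalar shift, and the $\mcM$-term is handled by monotone convergence plus a Cauchy--Schwarz control of the error $g^\nu$ against the lower bound $\nu^\alpha\Id$. Your construction needs only $L^2$ control of \emph{first} derivatives of $\chi^\nu$, and the calibration $\nu^{2-\alpha}\epsilon(\nu)^{-2}\to 0$ is achievable for any $\alpha\in(0,2)$; so, contrary to your closing remark, the hypothesis \eqref{ass:alpha} is not what your argument uses --- it is forced only by the paper's perturbation of $B$, and your route would in fact permit a weaker assumption on $\alpha$. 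Both approaches reach the same conclusion; yours is arguably simpler and slightly more general.
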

\begin{proof}
We first consider the liminf limit. We notice that we can omit the continuous linear terms. Similarly the quadratic terms in \eqref{def:LNS} are weakly lower semicontinuous  with respect to the weak convergence in $\mathcal{EB}\Psi_{2,\infty,2}$.  We focus only on the term $\mcM$. We recall that we have:
\begin{align}
&\mcM[\mbE-(a_V+\nu^\alpha)\bV,N(a_V+\nu^\alpha)]=\nonumber\\
&\sup_{M\ge Z\otimes Z} \;\;\frac{1}{2}\int_0^T\iTTd
2 (\mbE_i-(a_V+\nu^\alpha)\bV^i) Z^i-N_{ij}(a_V+\nu^\alpha)M^{ij}
\in [-\infty,0],
\label{K}
\end{align}
where the supremum is performed over all pairs $(Z,M)$ 
of continuous functions on
$[0,T]\times \TTd$, respectively valued
in $\mathbb{R}^d$ and in the set of 
symmetric matrices $M$. Thus $\mcM$ being a pointwise supremum of affine functions is a convex function and also it is lower semicontinuous with respect to the weak convergence in $\mathcal{EB}\Psi_{2,\infty,2}$, so we get the liminf inequality.

For the limsup, we take  $(\bar{\mbE},\bar{B})\in L^2_T(L^2)\times L^\infty_T(L^\infty)$  so that, by denoting the first order operator

$$
(L\mbE)_{ij}:=\frac{1}{2}(\partial_j \mbE_i+\partial_i \mbE_j)-\partial_i\partial_j\Delta^{-1}\partial_l \mbE_l
$$

we have, in a weak sense:

\be\label{constr:EBEuler}
\partial_t \bar{B}=L\bar{\mbE}
\ee

We take an arbitrary function $\bar \chi\in L^2_T(L^2)$. For the limsup inequality we aim to find $(\mbE^\nu,B^\nu,\chi^\nu)\in\mathcal{EB}\Psi_{2,\infty,2}^\nu $ such that $d((\mbE^\nu,B^\nu,\chi^\nu),(\bar{\mbE},\bar{B},\bar\chi))\to 0$ as $\nu\to 0$ and also that 
\bea\label{rel:limsup}
\limsup_{\nu}\widetilde{\mathcal{A}^\nu_{NS}}(\mbE^\nu,B^\nu,\chi^\nu)\le 
\widetilde{\mathcal{A}^0_{NS}}(\bar{\mbE},\bar{B},\bar{\chi})
\eea

To this end we take $\chi^\nu\in C^\infty_c(\R\times \R^d)$ to be such that

\be
\chi^\nu\to\bar\chi\textrm{ in }L^2_T(L^2)\textrm{ and a.e.}
\ee and

\be \label{rel:Highderivchi}
\nu\|\chi^\nu\|_{L^2_T(H^{[d/2]+3})}\le C\nu^{\frac{1}{[d/2]+4}}\to 0
\ee  hence in particular, by the Sobolev embeddings:

\be\label{conv:nudeltachi}
|\nu\partial_i\partial_j \chi_{kl}^\nu|_{L^\infty}\le \tilde{C}\nu^{\frac{1}{[d/2]+4}}\to  0, \forall i,j,k,l=1,\dots,d
\ee where the constants $C$ and $\tilde C$ in the last two relations do not depend on $\nu$.

We define 

$$\mbE^\nu:=\bar{\mbE}$$

and then take

\be\label{def:Bnu}
B^\nu:=\bar{B}-2\nu\int_t^T\partial_i\partial_j\Delta^{-1}(\partial_k\partial_l\chi_{kl}^\nu)-\frac{1}{2}(\partial_j\partial_l\chi_{li}^\nu+\partial_i\partial_l\chi_{lj}^\nu)\,ds
\ee

Then  we have

$$
\partial_t B^\nu_{ij}=(L\mbE^\nu)_{ij}+2\nu\partial_i\partial_j\Delta^{-1}(\partial_i\partial_j\chi_{ij}^\nu)-\nu(\partial_j\partial_l\chi_{li}^\nu+\partial_i\partial_l\chi_{lj}^\nu)
$$ hence $E^\nu,B^\nu,\chi^\nu$ satisfy the compatibility relation to be in the space $\mathcal{EB}\Psi_{2,\infty,2}^\nu$.

In order to obtain the relation   \eqref{rel:limsup} we note that we can pass to the limit in the second integral in \eqref{def:LNS}  and also in the  integral containing $V^0 $ thanks to the strong convergence of $\chi^\nu$ and $B^\nu$ in $L^2$ and \eqref{rel:Highderivchi}. Moreover, for $\nu$ small enough we have (thanks to the definition \eqref{def:Bnu} of $B^\nu$, the bound \eqref{conv:nudeltachi} and the assumption \eqref{ass:alpha} on $\alpha$):

\be\label{rel:avnavn}
(a_V+\nu^\alpha)Id+2B^\nu\ge  Id+2\bar B\ge 0
\ee 

Furthermore, for $\nu$ small enough, we also have:

\be\label{rel:avnnu}
(1+\nu^\alpha)Id+2B^\nu\ge (a_V+\frac{\nu^\alpha}{2})Id+2\bar{B}\ge \frac{\nu^\alpha}{2}Id
\ee

We take now $R^\nu:[0,T]\times\TTd\to \mathbb{O}(d)$ (where $\mathbb{O}(d)$ denotes the group of orthogonal matrices) to be such that at almost all $(t,x)\in [0,T]\times\TTd$ we have

\be
R^\nu((a_V+\nu^\alpha)Id+2B^\nu)^{-1}(R^\nu)^t=D^\nu
\ee where we denoted:

\be
D^\nu:=\textrm{diag}(f_1^\nu,\dots, f_d^\nu)
\ee with $f_i^\nu\ge 0, i=1,\dots,d $  the eigenvalues of $\bigg((a_V+\nu^\alpha)Id+2B^\nu\bigg)^{-1}$. 

We then have, for small enough $\nu$:

\bea
&\bigg(\mbE^\nu-(a_V+\nu^\alpha)\bV\bigg)\bigg((a_V+\nu^\alpha)Id+2B^\nu\bigg)^{-1}\bigg(\mbE^\nu-(a_V+\nu^\alpha)\bV\bigg)\non\\
&=R^\nu(\mbE^\nu-(a_V+\nu^\alpha)\bV)D^\nu R^\nu(\mbE^\nu-(a_V+\nu^\alpha)\bV)\non\\
&=R^\nu(\bar{\mbE}-(a_V+\nu^\alpha)\bV)D^\nu R^\nu(\bar{\mbE}-(a_V+\nu^\alpha)\bV)\non\\
&= \bigg(R^\nu(\bar{\mbE}-(a_V+\nu^\alpha)\bV)\bigg)_i^2 f_i^\nu\le 2\bigg(R^\nu(\bar{\mbE}-a_V\bV)\bigg)_i^2 f_i^\nu+2\nu^{2\alpha}\bigg(R^\nu\bV\bigg)_i^2 f_i\non
\eea
\bea
&= 2\bigg(\bar{\mbE}-a_V\bV\bigg)\bigg((a_V+\nu^\alpha)Id+2B^\nu\bigg)^{-1}\bigg(\bar{\mbE}-a_V\bV\bigg)\non\\
&+2\nu^{2\alpha}\bV\bigg((a_V+\nu^\alpha)Id+2B^\nu\bigg)^{-1}\bV\non\\
&\le 2\bigg(\bar\mbE-a_V\bV\bigg)\bigg(a_V Id+2\bar{B}\bigg)^{-1}\bigg(\bar\mbE-a_V\bV\bigg)+4\nu^{\alpha}|\bV|^2\non\\
&\le 2\bigg(\bar\mbE-a_V\bV\bigg)\bigg(a_V Id+2\bar{B}\bigg)^{-1}\bigg(\bar\mbE-a_V\bV\bigg)+4|\bV|^2
\eea where for the penultimate inequality we used \eqref{rel:avnavn} and \eqref{rel:avnnu}.

Thus, since the last two terms in the inequality above are integrable we can use the dominated convergence theorem and the suitable pointwise convergence to get

\be
\lim_{\nu\to 0}\mcM[\mbE^\nu-(a_V+\nu^\alpha)\bV,N(a_V+\nu^\alpha,B^\nu)]\le \mcM[\bar{\mbE}-a_V\bV,N(a_V,\bar B)]
\ee
\end{proof}

\bigskip
We further consider the issue of $\Gamma$-convergence in a weaker topology, in which we can also obtain equi-coerciveness. This will allow us in particular to prove the convergence of the minimizers. 

To this end we note that out of estimate \eqref{bound-Echi} in Theorem~\ref{existenceNS} we have that minimizers $(\mbE,B,\chi)$ of the functional $\mathcal{A}^\nu_{NS}(\mbE,B,\chi)$ satisfy the bound (with $\|\cdot\|$ denoting the $L^2_T(L^2(\TTd))$ norm):

\bea\label{bound-Enuaalpha}
\|\mbE\|^2\le  &\frac{8}{5}(a_V+\nu^\alpha)^2(3+d)\|\bV\|^2+\frac{56}{5}d a_W(a_V+\nu^\alpha)\|\bW\|^2+\frac{64}{5}d^2(a_V+\nu^\alpha)^2{5}\|V_0\|^2+\non\\
&+\frac{128}{5}d(a_V+\nu^\alpha)a_W\nu^2\|\nabla V^0\|^2+\frac{12(a_V+\nu^\alpha)^3}{5a_W}(d-1)^2d^2|\mathbb{T}| T
\eea

\bea\label{bound-chialphanu}
\|\chi\|^2
\le  &\frac{2(3+d)}{d}(a_V+\nu^\alpha)a_W\|\bV\|^2+14a_W^2\|\bW\|^2+16d(a_V+\nu^\alpha)a_W\|V_0\|^2\non\\
&+32a_W^2\nu^2\|\nabla V^0\|^2+3(a_V+\nu^\alpha)^2(d-1)^2d|\mathbb{T}| T
\eea while for $B$ we have the uniform estimate  (see beginning of Section~\ref{sec:Euler})

\be\label{Bestnu}
|B|\le \sqrt{d}\frac{(d-1)}{2}(a_V+\nu^\alpha)
\ee where $|B|$ denotes the Frobenius norm of the symmetric matrix $B$.

We denote by $b\mbE(\nu,a_V,a_W,\bV,\bW,V_0,\nabla V_0,T,d)$ the expression on the right-hand side of \eqref{bound-Enuaalpha}, by $b\chi(\nu,a_V,a_W,\bV,\bW,V_0,\nabla V_0,T,d)$ the expression on the right-side of \eqref{bound-chialphanu}, respectively by $bB(\nu,a_V,d)$ the expression on the right-hand side of \eqref{Bestnu}. Let us note that since $a_V,a_W>0$ the functions $b\mbE,b\chi$ and $bB$ are increasing as functions in $\nu$. Thus denoting  the balls:

\bea
\mathfrak{B}(\nu_0,a_V,a_W,\bV,\bW,V_0,\nabla V_0,T,d):=\{&(\mbE,\chi,B)\in L^2\times L^2\times L^\infty);\non\\
&\|E\|_{L^2}\le b\mbE(\nu_0,a_V,a_W,\bV,\bW,V_0,\nabla V_0,T,d),\non\\
&\|\chi\|_{L^2}\le b\chi(\nu_0,a_V,a_W,\bV,\bW,V_0,\nabla V_0,T,d),\non\\
&|B|_{L^\infty}\le bB(\nu_0,a_V,d)\}
\eea (where the spaces $L^2$, respectively $L^\infty$ are taken to be both in space and time) we have that $\mathfrak{B}(\nu,a_V,a_W,\bV,\bW,V_0,\nabla V_0,T,d)\subset \mathfrak{B}(\nu_0,a_V,a_W,\bV,\bW,V_0,\nabla V_0,T,d)$ for all $\nu\le \nu_0$.

We consider the metric space \footnote{The space also depends on several other variables, as indicated in the definition of $\mathfrak{B}$, but, for the sake of readability, we omit explicitly indicating notationally this dependence. } $$\widetilde{\mathcal{EB}\Psi_{2,\infty,2}}^w(\nu_0):=\cup_{\nu\ge 0}\mathcal{EB}\Psi_{2,\infty,2}^\nu\cap \mathfrak{B}(\nu_0,a_V,a_W,\bV,\bW,V_0,\nabla V_0,T,d)$$ which is now endowed with the weak topology of $L^2_T(L^2)$ for the $E$ and $\chi$ components and with the 
weak-* topology of $L^\infty_T(L^\infty)$ for the $B$  component. It is known that since we are on a bounded set this topology is metrizable so we will consider without further comment the space as being a metric space. This is useful as it allows us to use the sequential characterisation of the $\Gamma$-limit. We have:

\begin{proposition}Let $V_0\in L^2(\Omega,\mathbb{R}^d)$ be 
a divergence-free vector field 
of zero spatial mean with $\nabla V_0\in L^2(\Omega,\mathbb{R}^{d\times d}) $ and $\nu_0>0$ some arbitrary positive number. Then 

\be
\widetilde{\mathcal{A}^\nu_{NS}}\stackrel{\Gamma}{\longrightarrow} \widetilde{\mathcal{A}^0_{NS}}
\ee  in $\widetilde{\mathcal{EB}\Psi_{2,\infty,2}}^w(\nu_0)$.
\end{proposition}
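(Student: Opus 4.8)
The plan is to establish the two inequalities characterising $\Gamma$-convergence, exploiting that on the bounded set $\mathfrak{B}(\nu_0,\ldots)$ the weak/weak-$*$ topology is metrizable, so that it suffices to argue sequentially. Much of the structure is inherited from Proposition~\ref{prop:gammaconvstrong}: by its variational representation \eqref{K}, $\mcM$ is a pointwise supremum of affine functionals, hence convex and lower semicontinuous already with respect to \emph{weak} convergence; the remaining quadratic term in $(\chi,B)$ is convex and strongly continuous, hence weakly lower semicontinuous; and the linear terms are weakly continuous. The genuinely new point relative to Proposition~\ref{prop:gammaconvstrong} is only that these arguments must be run against weakly (rather than strongly) converging sequences, and that the recovery sequence must be checked to remain inside $\mathfrak{B}(\nu_0,\ldots)$.

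For the $\liminf$ inequality I would take $\nu_n\to 0$ and $(\mbE^n,B^n,\chi^n)\in\mathcal{EB}\Psi_{2,\infty,2}^{\nu_n}\cap\mathfrak{B}(\nu_0,\ldots)$ converging in the weak/weak-$*$ sense to $(\bar\mbE,\bar B,\bar\chi)$. First I would check that the limit lies in $\mathcal{EB}\Psi_{2,\infty,2}^0$: passing to the limit in the constraint \eqref{rel:testcompatibility}, the terms carrying the explicit factor $\nu_n$ tend to zero in $H^{-2}$ (since $\chi^n$ is bounded in $L^2$), so $(\bar\mbE,\bar B)$ satisfies \eqref{constr:EBEuler} and $\widetilde{\mathcal{A}^0_{NS}}$ is finite-valued on the limit. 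The delicate term is $\mcM$, where the shift $a_V+\nu_n^\alpha$ depends on $n$; fixing a test pair $(Z,M)$ in \eqref{K}, the integrand is affine in $(\mbE,B)$ and, since $Z,M$ are fixed continuous functions and $\nu_n^\alpha\to 0$, it converges under weak/weak-$*$ convergence to the $\nu=0$ integrand. Taking the supremum over $(Z,M)$ then yields $\liminf_n\mcM[\mbE^n-(a_V+\nu_n^\alpha)\bV,N(a_V+\nu_n^\alpha,B^n)]\ge\mcM[\bar\mbE-a_V\bV,N(a_V,\bar B)]$. Combining with the weak lower semicontinuity of the $(\chi,B)$-quadratic term, the weak continuity of $\int V_0\cdot\mbE$, the vanishing of $\nu_n\int\partial V_0\,\chi^n$, and the convergence of the constants gives the $\liminf$ inequality.

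For the $\limsup$ inequality I would reuse verbatim the recovery sequence built in Proposition~\ref{prop:gammaconvstrong}: $\mbE^\nu=\bar\mbE$, $\chi^\nu\in C_c^\infty$ with $\chi^\nu\to\bar\chi$ in $L^2$ and satisfying \eqref{rel:Highderivchi}, and $B^\nu$ defined through \eqref{def:Bnu}. This sequence converges strongly, hence weakly, and the energy estimate $\limsup_\nu\widetilde{\mathcal{A}^\nu_{NS}}(\mbE^\nu,B^\nu,\chi^\nu)\le\widetilde{\mathcal{A}^0_{NS}}(\bar\mbE,\bar B,\bar\chi)$ is exactly the one proved there (dominated convergence for the $\mcM$-term, strong $L^2$ convergence for the rest). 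What remains is membership in $\mathfrak{B}(\nu_0,\ldots)$ for $\nu$ small: since $\mbE^\nu=\bar\mbE$ and $\bar\mbE$ already lies in the ball the $\mbE$-component is fine, and choosing $\chi^\nu$ by mollification (an $L^2$-contraction) keeps $\|\chi^\nu\|_{L^2}\le\|\bar\chi\|_{L^2}$.

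The main obstacle — and the only genuinely new step — is the $L^\infty$ bound on $B^\nu$. Here I would use that the target satisfies the $\nu=0$ semidefiniteness constraint $a_V\Id+2\bar B\ge 0$, which by the argument at the start of Section~\ref{sec:Euler} forces $|\bar B|\le\sqrt d\,\tfrac{d-1}{2}a_V=bB(0)$, \emph{strictly} below the radius $bB(\nu_0)$ of the ball. Since \eqref{def:Bnu} perturbs $\bar B$ by a term that is $O(\nu^{1/([d/2]+4)})$ in $L^\infty$ by \eqref{conv:nudeltachi}, for $\nu$ small enough one has $|B^\nu|_{L^\infty}\le bB(0)+o(1)<bB(\nu_0)$, so the recovery sequence indeed lies in $\mathfrak{B}(\nu_0,\ldots)$ and hence in $\widetilde{\mathcal{EB}\Psi_{2,\infty,2}}^w(\nu_0)$. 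This strict interiority of $\bar B$, together with the nested structure $\mathfrak{B}(\nu)\subset\mathfrak{B}(\nu_0)$ for $\nu\le\nu_0$, is what makes the weaker, equi-coercive topology compatible with the recovery construction.
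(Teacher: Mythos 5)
Your argument is correct and follows essentially the same route as the paper, which in fact omits the proof entirely with the remark that the $\liminf$ part of Proposition~\ref{prop:gammaconvstrong} already uses only weak convergence and that the recovery sequence can be reused verbatim. The two points you add beyond the paper's sketch --- the weak closedness of the constraint set (the $\nu_n$-terms in \eqref{rel:testcompatibility} vanishing in $H^{-2}$) and the strict interiority $|\bar B|_{L^\infty}\le bB(0)<bB(\nu_0)$ guaranteeing that the recovery sequence stays in $\mathfrak{B}(\nu_0,\ldots)$ --- are exactly the details needed to make that remark rigorous, and both are argued correctly.
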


We omit the proof as it follows closely the one of \eqref{prop:gammaconvstrong}, namely the liminf part is the same, because we worked with weak convergence. Also the recovery sequence in the limsup part can be chosen exactly the same.

Then, we have:

\begin{theorem}
Let $(\mbE^\nu,\chi^\nu,B^\nu)$ be minimisers of $\widetilde{\mathcal{A}^\nu_{NS}}$. Then, there exists a sequence $\{(\mbE^{\nu_k},\chi^{\nu_k},B^{\nu_k})\}_{k\in\N}$ and a minimiser $(\mbE^0,\chi^0,B^0)$ of $\widetilde{\mathcal{A}^0_{NS}}$ such that as $k\to\infty$ we have $(\mbE^{\nu_k},\chi^{\nu_k},B^{\nu_k})\to (\mbE^0,\chi^0,B^0)$ in $\widetilde{\mathcal{EB}\Psi_{2,\infty,2}}^w(\nu_0)$.
\end{theorem}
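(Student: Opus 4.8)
The plan is to deduce the result from the fundamental theorem of $\Gamma$-convergence: if a family of functionals is equi-coercive and $\Gamma$-converges, then (along a subsequence) minimizers converge to a minimizer of the $\Gamma$-limit, and the minimal values converge as well. The $\Gamma$-convergence $\widetilde{\mathcal{A}^\nu_{NS}}\stackrel{\Gamma}{\longrightarrow}\widetilde{\mathcal{A}^0_{NS}}$ in the metric space $\widetilde{\mathcal{EB}\Psi_{2,\infty,2}}^w(\nu_0)$ is exactly the content of the preceding Proposition, so the work reduces to supplying equi-coercivity and to verifying that the limit point is \emph{admissible} for the $\nu=0$ problem. These two ingredients, fed into the abstract theorem, then yield both the convergence of minimizers and the identification of the limit as a minimizer of $\widetilde{\mathcal{A}^0_{NS}}$.

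First I would fix an arbitrary sequence $\nu_k\to 0^+$ with $\nu_k\le\nu_0$ and, for each $k$, take the given minimizer $(\mbE^{\nu_k},\chi^{\nu_k},B^{\nu_k})$ of $\widetilde{\mathcal{A}^{\nu_k}_{NS}}$, whose existence is guaranteed by Theorem~\ref{existenceNS}. The a priori bounds \eqref{bound-Enuaalpha}, \eqref{bound-chialphanu}, \eqref{Bestnu}, together with the monotonicity in $\nu$ of their right-hand sides noted in the text, show that every such minimizer lies in the fixed ball $\mathfrak{B}(\nu_0,a_V,a_W,\bV,\bW,V_0,\nabla V_0,T,d)$. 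Since this ball is bounded in $L^2\times L^2\times L^\infty$, it is sequentially compact for the product of the weak $L^2$ topologies (for $\mbE$ and $\chi$) and the weak-$*$ $L^\infty$ topology (for $B$), and it is metrizable on bounded sets; this is precisely the equi-coercivity required. Passing to a subsequence (not relabelled), I obtain $(\mbE^{\nu_k},\chi^{\nu_k},B^{\nu_k})\to(\mbE^0,\chi^0,B^0)$ in $\widetilde{\mathcal{EB}\Psi_{2,\infty,2}}^w(\nu_0)$.

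The hard part will be checking that the limit $(\mbE^0,\chi^0,B^0)$ is admissible at $\nu=0$, i.e. that it belongs to $\mathcal{EB}\Psi_{2,\infty,2}^0$. Each minimizer satisfies the weak form of the compatibility constraint \eqref{rel:testcompatibility} with $\nu=\nu_k$; testing against a fixed smooth symmetric field $\psi$ on $[0,T]\times\TTd$ and integrating by parts, the terms carrying the factor $\nu_k$ take the form $\nu_k\langle\chi^{\nu_k},\partial^2\psi\rangle$, which tend to zero because $\|\chi^{\nu_k}\|_{L^2}$ stays bounded within the ball while $\nu_k\to 0$. The first-order operator $L$ of \eqref{operatorL} is continuous against smooth test fields under weak convergence of $\mbE^{\nu_k}$, so in the limit the weak identity reduces to $\partial_t B^0=L\mbE^0$, which is exactly the $\nu=0$ constraint \eqref{constr:EBEuler}. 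Hence $(\mbE^0,B^0,\chi^0)\in\mathcal{EB}\Psi_{2,\infty,2}^0$ and $\widetilde{\mathcal{A}^0_{NS}}$ is finite there.

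Finally I would combine the two halves of the $\Gamma$-convergence. The liminf inequality gives $\widetilde{\mathcal{A}^0_{NS}}(\mbE^0,\chi^0,B^0)\le\liminf_k\widetilde{\mathcal{A}^{\nu_k}_{NS}}(\mbE^{\nu_k},\chi^{\nu_k},B^{\nu_k})=\liminf_k\min\widetilde{\mathcal{A}^{\nu_k}_{NS}}$. Conversely, for any competitor $(\mbE,\chi,B)\in\mathcal{EB}\Psi_{2,\infty,2}^0$, evaluating $\widetilde{\mathcal{A}^{\nu_k}_{NS}}$ along the recovery sequence constructed in the previous Proposition and using that the minimal value does not exceed this evaluation yields $\limsup_k\min\widetilde{\mathcal{A}^{\nu_k}_{NS}}\le\widetilde{\mathcal{A}^0_{NS}}(\mbE,\chi,B)$. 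Taking the infimum over competitors and chaining the two inequalities shows at once that $(\mbE^0,\chi^0,B^0)$ minimizes $\widetilde{\mathcal{A}^0_{NS}}$ and that $\min\widetilde{\mathcal{A}^{\nu_k}_{NS}}\to\min\widetilde{\mathcal{A}^0_{NS}}$, which completes the argument.
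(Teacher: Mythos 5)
Your proposal is correct and follows essentially the same route as the paper, which disposes of this theorem in two sentences by invoking weak/weak-* compactness of the bounded sets together with the standard liminf/limsup consequences of $\Gamma$-convergence. You supply more detail than the paper does — in particular the explicit verification that the weak limit satisfies the $\nu=0$ compatibility constraint \eqref{constr:EBEuler}, a point the paper leaves implicit — but this is a fleshing-out of the same argument rather than a different one.
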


The proof is standard, as we can apply the Hahn Banach Theorem and extract a subsequence converging in the corresponding weak topology. Using the liminf and limsup properties of the $\Gamma$-limit one can check that the limit of the sequence is a minimiser for $\widetilde{\mathcal{A}^0_{NS}}$.

\bigskip
\textbf{Acknowledgment:} A.A. has been partially supported by Simons Pivot Fellowship grant \# 983171. B.S. has been partially supported by the project STAR PLUS 2020 Linea 1 (21-UNINA-EPIG-172) New perspectives in the Variational modeling of Continuum Mechanics. A.Z. has been partially supported by the Basque Government through the BERC 2022-2025 program and by the Spanish State Research Agency through BCAM Severo Ochoa CEX2021-001142 and through Grant PID2023-146764NB-I00 funded by MICIU/AEI /10.13039/501100011033 and by ERDF, EU. A.Z. was also partially supported  by a grant of the Ministry of Research, Innovation and Digitization, CNCS - UEFISCDI, project number PN-III-P4-PCE-2021-0921, within PNCDI III.

All the authors acknowledge the hospitality in Hausdorff Research Institute for Mathematics funded by the Deutsche
Forschungsgemeinschaft (DFG, German Research Foundation) under Germany's Excellence Strategy EXC-2047/1–390685813.

\bibliographystyle{alpha}
\bibliography{dualNSE.bib}

@article{liu2019least,
  title={Least action principles for incompressible flows and geodesics between shapes},
  author={Liu, Jian-Guo and Pego, Robert L. and Slep{\v{c}}ev, Dejan},
  journal={Calculus of Variations and Partial Differential Equations},
  volume={58},
  pages={1--43},
  year={2019},
  publisher={Springer}
}

@article{acharya2023hidden,
  title   = {A hidden convexity in continuum mechanics, with application to 
             classical, continuous-time, rate-(in)dependent plasticity},
  author  = {Acharya, Amit},
  journal = {Mathematics and Mechanics of Solids},
  year    = {2025},
volume = {30(3)},
pages = {701--719},
url={https://arxiv.org/abs/2310.03201}
}

@article{kpa,
  title={Traveling wave profiles for a semi-discrete {B}urgers equation},
  author={Kouskiya, Uditnarayan and Pego, Robert L. and Acharya, Amit},
  journal={Physica D},
vol={483},
pages={134961},
  year={2025},
url={https://arxiv.org/abs/2504.12171}
}

@article{KA1,
  title = {Hidden convexity in the heat, linear transport, and {E}uler's rigid body equations: {A} computational approach},
  author  = {Kouskiya, Uditnarayan and Acharya, Amit},
  journal = {Quarterly of Applied Mathematics},
  volume  = {LXXXII},
  pages   = {673--703},
  year    = {2024}
}

@article{Brenier-CMP,
	author = {Brenier, Yann},
	date-added = {2024-05-07 12:34:57 +0200},
	date-modified = {2024-05-07 12:35:06 +0200},
	doi = {10.1007/s00220-018-3240-7},
	fjournal = {Communications in Mathematical Physics},
	issn = {0010-3616,1432-0916},
	journal = {Comm. Math. Phys.},
	mrclass = {35Q31 (49K27 76B03)},
	mrnumber = {3869437},
	mrreviewer = {Xinyu\ He},
	number = {2},
	pages = {579--605},
	title = {The initial value problem for the {E}uler equations of incompressible fluids viewed as a concave maximization problem},
	url = {https://doi.org/10.1007/s00220-018-3240-7},
	volume = {364},
	year = {2018},
	bdsk-url-1 = {https://doi.org/10.1007/s00220-018-3240-7}}

@article{acharyaQAM,
  title={Variational principles for nonlinear {PDE} systems via duality},
  author={Acharya, Amit},
  journal={Quarterly of Applied Mathematics},
  volume={LXXX1},
  number={1},
  pages={127--140},
  year={2023}
}

@article{ach2,
  title={A dual variational principle for nonlinear dislocation dynamics},
  author={Acharya, Amit},
  journal={Journal of Elasticity},
  volume={154},
  number={1},
  pages={383--395},
  year={2023},
  publisher={Springer}
}

@article{caffarelli1982partial,
  title={Partial regularity of suitable weak solutions of the {N}avier-{S}tokes equations},
  author={Caffarelli, Luis and Kohn, Robert and Nirenberg, Louis},
  journal={Communications on pure and applied mathematics},
  volume={35},
  number={6},
  pages={771--831},
  year={1982},
  publisher={Wiley Online Library}
}

@article {Gho,
    AUTHOR = {Ghoussoub, Nassif and Moameni, Abbas},
     TITLE = {Anti-symmetric {H}amiltonians. {II}. {V}ariational resolutions
              for {N}avier-{S}tokes and other nonlinear evolutions},
   JOURNAL = {Ann. Inst. H. Poincar\'e{} C Anal. Non Lin\'eaire},
  FJOURNAL = {Annales de l'Institut Henri Poincar\'e{} C. Analyse Non
              Lin\'eaire},
    VOLUME = {26},
      YEAR = {2009},
    NUMBER = {1},
     PAGES = {223--255},
      ISSN = {0294-1449,1873-1430},
   MRCLASS = {35Q30 (35A15 35K55 37K05 47J30 76D03 76D05)},
  MRNUMBER = {2483820},
MRREVIEWER = {Vicen\c tiu\ D.\ R\u adulescu},
       DOI = {10.1016/j.anihpc.2007.11.002},
       URL = {https://doi.org/10.1016/j.anihpc.2007.11.002},
}

@article{sga,
  title={A hidden convexity of nonlinear elasticity},
  author={Singh, Siddharth and Ginster, Janusz and Acharya, Amit},
  journal={Journal of Elasticity},
  volume={156},
  number={3},
  pages={975--1014},
  year={2024},
  publisher={Springer}
}

@article{KA2,
  author       = {Kouskiya, Uditnarayan and Acharya, Amit},
  title        = {{I}nviscid {B}urgers as a degenerate elliptic problem},
journal = {Quarterly of Applied Mathematics},
  volume  = {LXXXIII},
  pages   = {315--360},
  year    = {2025}
}

@article {GiMo,
    AUTHOR = {Gigli, Nicola and Mosconi, Sunra J. N.},
     TITLE = {A variational approach to the {N}avier-{S}tokes equations},
   JOURNAL = {Bull. Sci. Math.},
  FJOURNAL = {Bulletin des Sciences Math\'ematiques},
    VOLUME = {136},
      YEAR = {2012},
    NUMBER = {3},
     PAGES = {256--276},
      ISSN = {0007-4497,1952-4773},
   MRCLASS = {35Q30 (35D35 65M22 76D05)},
  MRNUMBER = {2914947},
MRREVIEWER = {J\"urgen\ Socolowsky},
       DOI = {10.1016/j.bulsci.2012.01.001},
       URL = {https://doi.org/10.1016/j.bulsci.2012.01.001},
}

@article {OSS,
    AUTHOR = {Ortiz, Michael and Schmidt, Bernd and Stefanelli, Ulisse},
     TITLE = {A variational approach to {N}avier-{S}tokes},
   JOURNAL = {Nonlinearity},
  FJOURNAL = {Nonlinearity},
    VOLUME = {31},
      YEAR = {2018},
    NUMBER = {12},
     PAGES = {5664--5682},
      ISSN = {0951-7715,1361-6544},
   MRCLASS = {35Q30 (35A15 35B25 35D30 76D05)},
  MRNUMBER = {3876558},
MRREVIEWER = {Peter\ Bernard\ Weichman},
       DOI = {10.1088/1361-6544/aae722},
       URL = {https://doi.org/10.1088/1361-6544/aae722},
}

@article {Ped,
    AUTHOR = {Pedregal, Pablo},
     TITLE = {A variational approach for the {N}avier-{S}tokes system},
   JOURNAL = {J. Math. Fluid Mech.},
  FJOURNAL = {Journal of Mathematical Fluid Mechanics},
    VOLUME = {14},
      YEAR = {2012},
    NUMBER = {1},
     PAGES = {159--176},
      ISSN = {1422-6928,1422-6952},
   MRCLASS = {76D03 (35Q30 76D05)},
  MRNUMBER = {2891196},
MRREVIEWER = {Isabelle\ Gruais},
       DOI = {10.1007/s00021-011-0058-x},
       URL = {https://doi.org/10.1007/s00021-011-0058-x},
}

\end{document}